\newtheorem{defn}{Definition} 
\newtheorem{rmk}{Remark} 
\newtheorem{thm}{Theorem} 
\newtheorem{prop}{Proposition} 
\newtheorem{lem}{Lemma}
\newtheorem{cor}{Corollary} 
\newtheorem{ass}{Assumption} 
\newcounter{constant}
\newtheorem*{mainthm}{Main Theorem}
\begin{document}
	\title{Lissajous-toric knots} 
	\author{Marc Soret and Marina Ville} 
	\date{ } 
	\maketitle

\begin{abstract} A point in the $(N,q)$-torus knot in $\mathbb{R}^3$ goes $q$ times along a vertical circle while this circle rotates $N$ times around the vertical axis. In the Lissajous-toric knot $K(N,q,p)$, the point goes along a vertical Lissajous curve (parametrized by $t\mapsto(\sin(qt+\phi),\cos(pt+\psi)))$ while this curve rotates $N$ times around the vertical axis. Such a knot has  a natural braid representation $B_{N,q,p}$ which we investigate here. If $gcd(q,p)=1$, $K(N,q,p)$ is ribbon; if $gcd(q,p)=d>1$, $B_{N,q,p}$ is the $d$-th power of a braid which closes in a ribbon knot. We give an upper bound for the $4$-genus of $K(N,q,p)$ in the spirit of the genus of torus knots; we also give examples of $K(N,q,p)$'s which are trivial knots.
	\end{abstract}

	\section{Introduction} 
	We study a class of knots generalizing torus knots, which we call {\it Lissajous-toric}: a torus knot is generated by a  a circle rotating around an axis and a Lissajous-toric knot is generated by a Lissajous curve rotating around the axis. There are several ways of describing them.
	\subsection{Lissajous-toric knots: various points of view} 
	\subsubsection{A description in $\mathbb{R}^3$}\label{a description in R3}	
	We recall the description of the  $(N,q)$-torus knot in  $\mathbb{R}^3$ endowed with an orthonormal frame $Oxyz$ (see for exemple [Cr] 1.5). If $\Gamma$ is the circle of radius $1$ centered at $(0,2,0)$ in the $yz$ plane, a point travelling along the knot  goes $q$ times around $\Gamma$ while $\Gamma$ is rotated $N$ times around the axis $Oz$. \\
	 In the case of the {\it Lissajous-toric knots}, we replace the vertical circle by a vertical Lissajous curve: we take three integers $N,q,p$ with $(N,q)=(N,p)=1$ and a real number $\phi$, and we define a knot $K(N,q,p,\phi)$ as follows. Consider the curve $C_{q,p,\phi}$ given in a vertical plane by  $$t\in[0,2\pi]\longrightarrow\mathbb{R}^3$$ $$:t\mapsto \Big(0,2+\sin\big(qt\big),\cos\big(p(t+\phi)\big)\Big)$$ and rotate $C_{q,p,\phi}$ is rotated $N$ times around the axis generated by $(0,0,1)$. In Cartesian coordinates, we write the knot as
	\[ (*)\left \{\begin{array}{lcr}
	x=\big(2+\sin(qt)\big)\cos(Nt) \\
	y=\big(2+\sin(qt)\big)\sin(Nt)\\
	z=\cos\big(p(t+\phi)\big)\end{array} \right. \]

	\subsubsection{A description in the $3$-dimensional cylinder}
	We write (*) above in cylindrical coordinates:
	\[ \left\{\begin{array}{lcr}
	\theta=Nt \\
	\rho=2+\sin(qt)\\
	z=\cos\big(p(t+\phi)\big)\end{array} \right. \] 
	Thus $K(N,q,p,\phi)$ is a closed $N$-braid which we can write in the $3$-cylinder $\mathbb{S}^1\times\mathbb{R}^2$ as follows : 
	\begin{equation}\label{definition du disque} 
	e^{it}\mapsto \Big(e^{Nit}, \sin\big(qt\big), \cos\big(p(t+\phi)\big)\Big) 
	\end{equation} 
	Note the similarity with the $(N,q)$-torus knot which is written in the $3$-sphere or the $3$-cylinder as
	\begin{equation}
	e^{it}\mapsto (\frac{1}{\sqrt{2}}e^{Nit}, \frac{1}{\sqrt{2}}e^{qit})
	\end{equation} 

		\subsubsection{Billiard curve in a solid torus}
	\begin{center} 
	 \begin{figure}[!h]
\includegraphics[scale = .4 ]{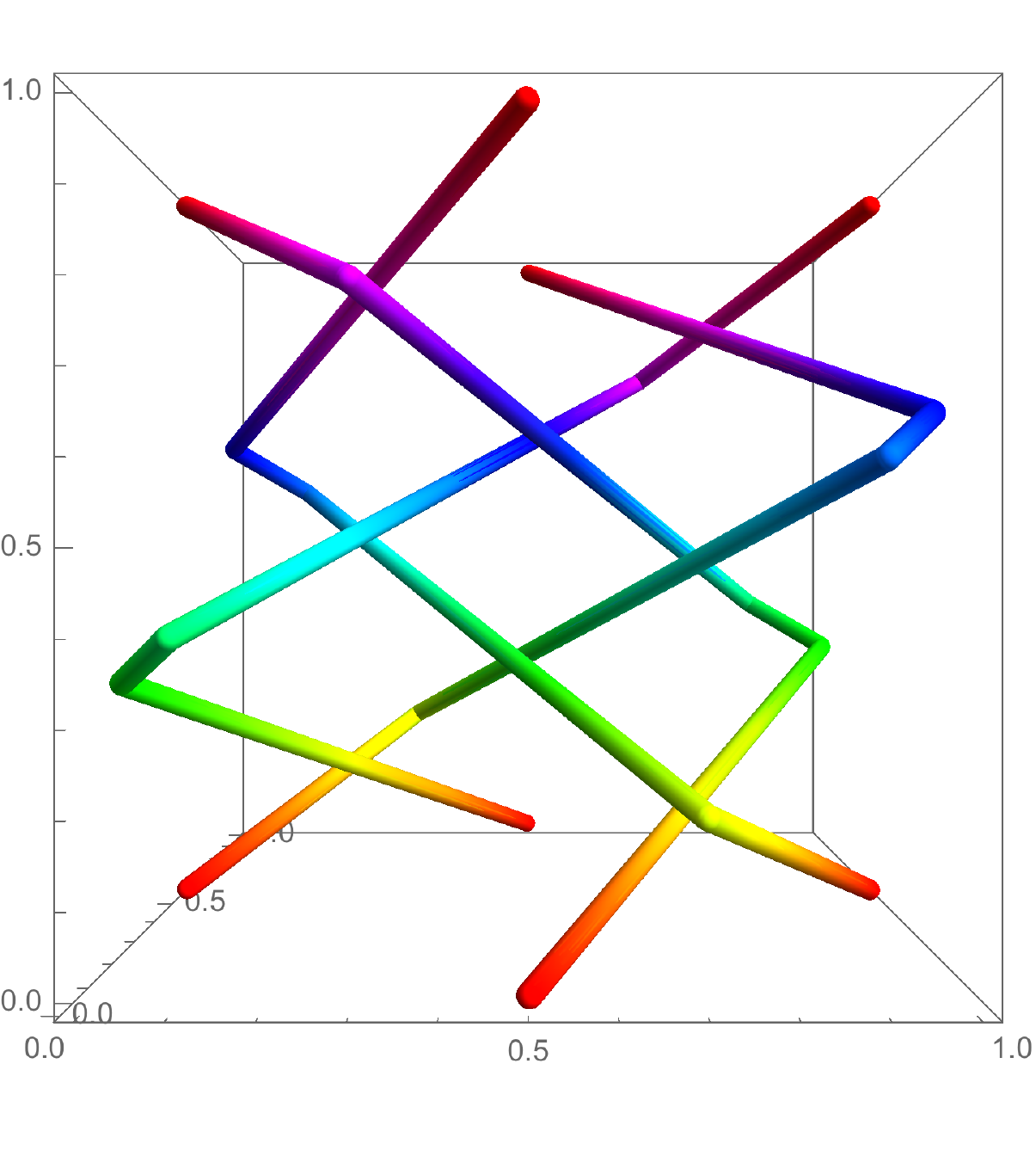}
\includegraphics[scale = .4]{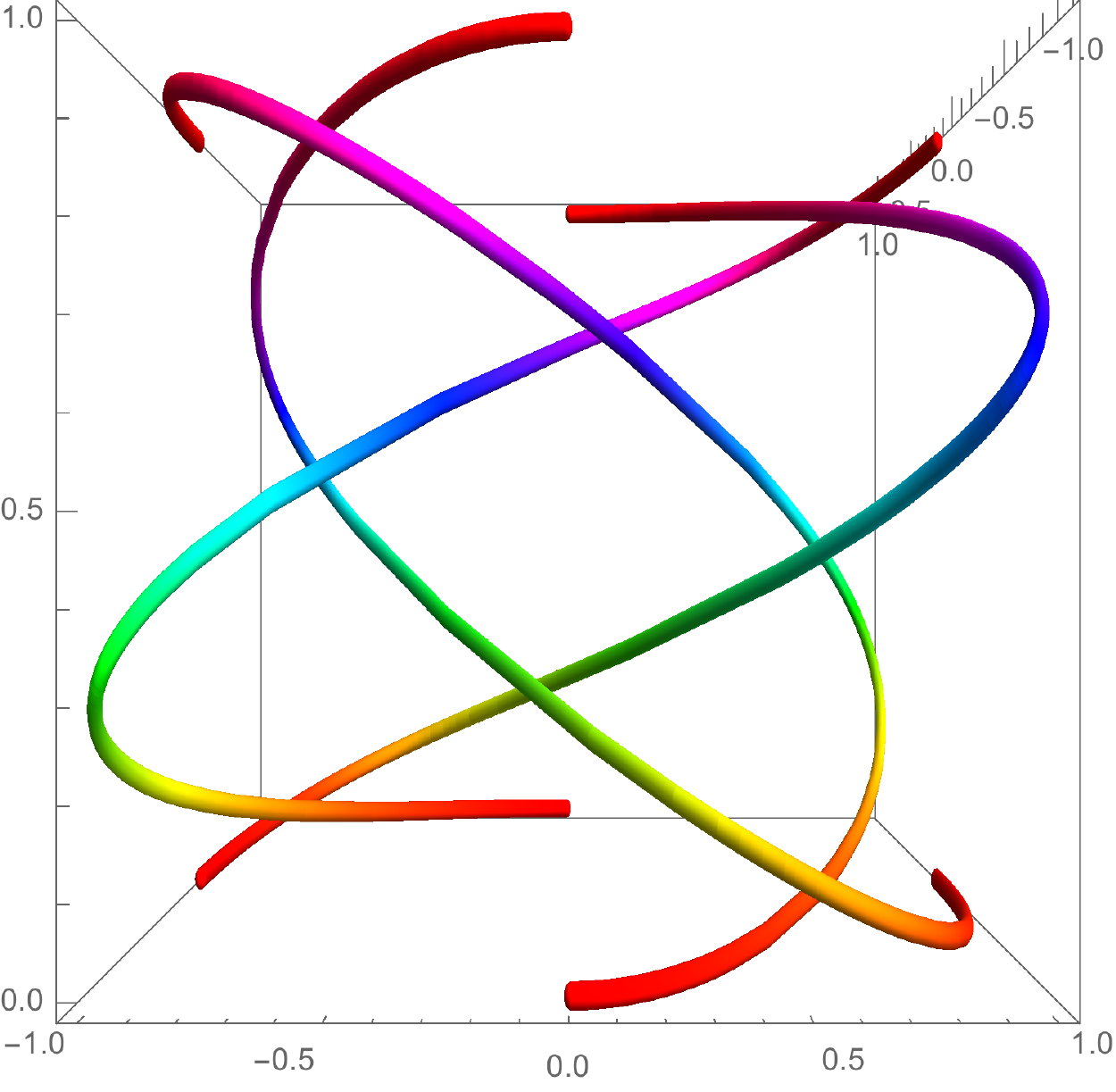}
 
\caption{\label{fig1} Perspective front  view of  knots $B(3,4,5)$ and  $K(3,4,5)$ }
\end{figure}
        \end{center}
	Just as Billiard curves are equivalent to Lissajous knots (cf. [JP])  Lissajous toric knots are equivalent to billiard curves in a   square solid torus, namely a cube where the top and bottom have been identified; C. Lamm introduced them in [La 1], see also the related [L-O].
	Such billiard curves are parametrized similarly to Lissajous toric knots; the trigonometric functions are replaced by  {\it saw-tooth functions}  of the type $g(t) := 2|t-[t]-\frac{1}{2}|.$  and 
	  $h(t) :=  t - [t] $.
	\begin{equation}
	  C(N,p,q, \phi)  : \left( 
	      \begin{array}{cc}
	         [0,2\pi]  &\longrightarrow [-1,1]^3 \\
	               t & \mapsto   \left( g(N.t), g(p.t+\phi), h(q.t) \right)
	       \end{array}  
	  \right)
	\end{equation}

	C. Lamm  noticed that these billiard curves in a  solid torus do not depend on the phase up to mirror transformation and stated that, if $p$ and $q$ are mutually prime, the knot $K(N,q,p)$ is ribbon.\\
 
	\subsubsection{Singularity knots of minimal surfaces}

	We first encountered the $K(N,q,p,\phi)$'s in [S-V] when  we studied the singularities of minimal disks in $\mathbb{R}^4$;  having noticed that their knot types do not depend on the phase $\phi$ up to mirror transformation,  we dropped the $\phi$ in the notation.\\
	
	We consider a minimal, i.e. conformal harmonic, map $F:\mathbb{D}\longrightarrow \mathbb{R}^4$ where $\mathbb{D}$ is the unit disk in $\mathbb{C}$, with $dF(0)=0$, i.e. $F$ has a branch point at $0$. If moreover $F$ is a topological embedding, we can copy Milnor's construction of algebraic knots ([Mi]) and take the intersection of $F(\mathbb{D})$ with a small sphere centered at $F(0)$: we obtain  a {\it minimal knot}. Complex curves are a special case of minimal surfaces and the germ $z\mapsto (z^N,z^q)$ yields the $(N,q)$-torus knot. 
	In  [S-V] the knots $K(N,q,p,\phi)$'s came from germs of singularities of the type 
	\begin{equation}\label{germ}
	z\mapsto \big(Re(z^N), Im(z^N), Im(z^q),Re(e^{pi\phi}z^p)\big)
	\end{equation}
	with
	\begin{equation}\label{hypothese riemannienne} 
	N < p,q
	\end{equation}
	In [S-V] we called the $K(N,q,p)$'s {\it simple minimal knots}; in the present paper we   drop the assumption (\ref{hypothese riemannienne}) and study these knots {\it per se}; {\it Lissajous-toric} is a more appropriate name for the general case.

	\subsection{Contents of the paper}\label{subsection: Contents of the paper} In [S-V] we defined a braid $B_{N,q,p}$ naturally associated to the knot $K(N,q,p)$; we describe it here in much greater detail. We view $B_{N,q,p}$ as a collection of graphs of $N$ functions from $[\eta,1+\eta]$ to $\mathbb{R}^2$; the purpose of the small positive number $\eta$ is to avoid crossing points at the endpoints of the interval.
	\begin{figure}[!h]
		\begin{center}
	\fbox{\includegraphics[width= 7 cm, height = 5 cm]{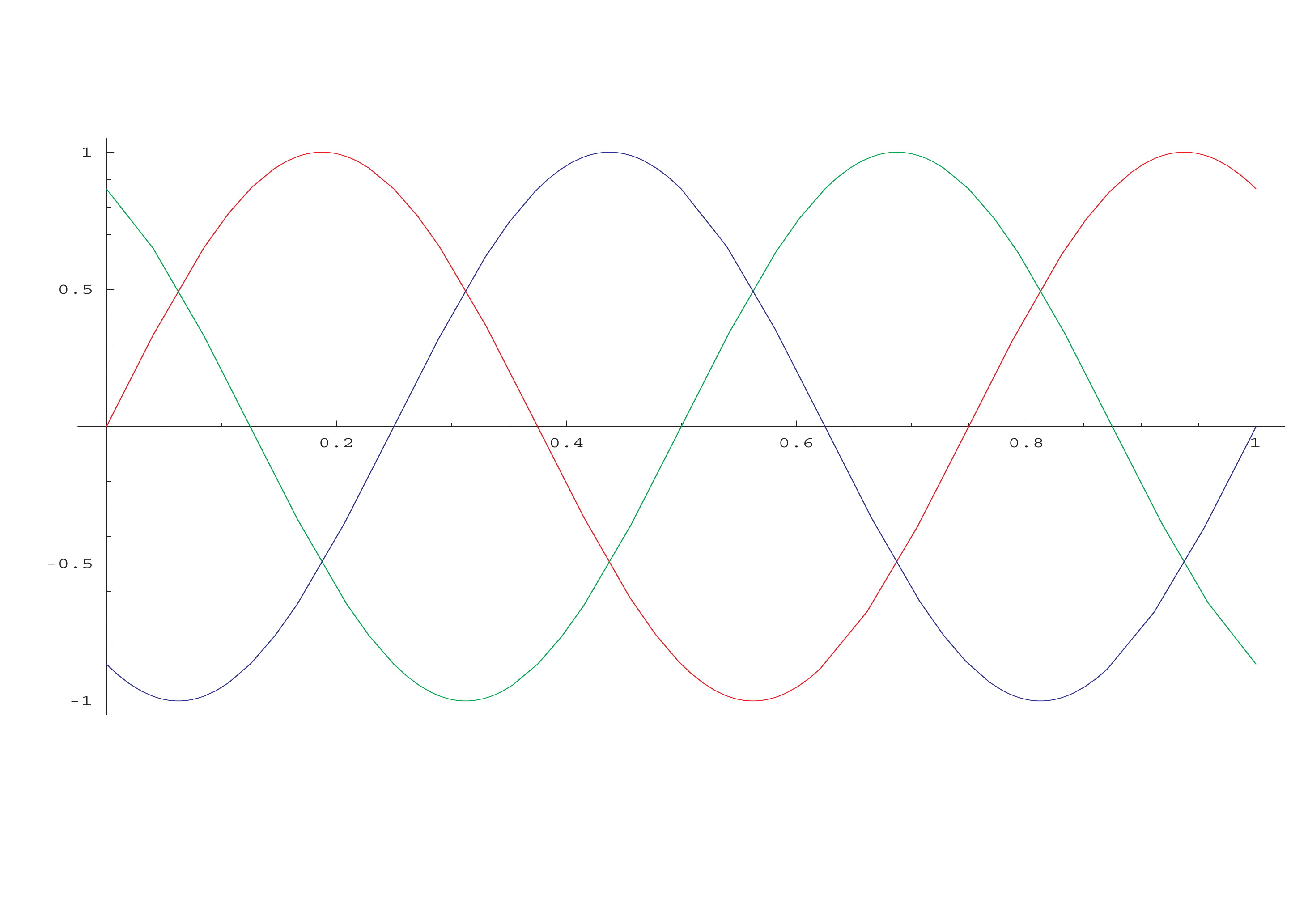}}
	\caption{Braid shadow of $B(5,q,p)$}\label{premiere image de tresse}
	\end{center}
	\end{figure}

	 We prove in \S \ref{preuve de la proposition sur les tresses periodiques} below
	 \begin{prop}\label{ce qui nous mene a l'assumption}
	  Let $d=gcd(p,q)$,  $\tilde{q}=\frac{q}{d}$, $\tilde{p}=\frac{p}{d}$ ;   then
	  \begin{equation}\label{B est une puissance de B tilda} 
	  B_{N,q,p}=B_{N,\tilde{q},\tilde{p}}^d \end{equation} \end{prop}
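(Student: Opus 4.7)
The approach is to exploit a $d$-fold rotational symmetry of the closed curve (\ref{definition du disque}) in the cylinder $\mathbb{S}^1\times\mathbb{R}^2$, arising from the $(2\pi/d)$-periodicity of the Lissajous component $t\mapsto(\sin(qt),\cos(p(t+\phi)))$, and then to identify a fundamental angular slice of the resulting symmetric braid with $B_{N,\tilde{q},\tilde{p}}$.

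First I would verify the periodicity: writing $q=d\tilde{q}$ and $p=d\tilde{p}$, the substitution $t\mapsto t+2\pi/d$ adds $2\pi\tilde{q}$ and $2\pi\tilde{p}$ to the sine and cosine arguments respectively, and hence fixes the Lissajous coordinates pointwise. Under the same substitution the cylinder angle $\theta=Nt$ is shifted by $2\pi N/d$, so the closed curve is invariant under the rotation $R:(\theta,\rho,z)\mapsto(\theta+2\pi N/d,\rho,z)$ of the cylinder. Since the standing assumption $\gcd(N,q)=1$ together with $d\mid q$ force $\gcd(N,d)=1$, this rotation has order exactly $d$, and the cyclic group it generates coincides with the group of rotations by multiples of $2\pi/d$.

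I would then use this $\mathbb{Z}/d\mathbb{Z}$-symmetry to write $B_{N,q,p}$ as a $d$-th power. Let $\beta$ denote the sub-braid obtained by restricting to the angular slice $\theta\in[0,2\pi/d]$. Because the closed curve is invariant under rotation by $2\pi/d$, each of the $d$ consecutive angular slices $[2\pi k/d,2\pi(k+1)/d]$ is a rigid translate of the first in $\theta$, and the $d$ sub-braids they produce are all equal to $\beta$ as elements of the braid group. Their concatenation recovers the full braid on $[0,2\pi]$, yielding $B_{N,q,p}=\beta^d$. Finally, the reparametrization $s=d\theta\in[0,2\pi]$ transforms the strand formulas of $\beta$ into those defining $B_{N,\tilde{q},\tilde{p}}$: the factors $q$ and $p$ reduce to $\tilde{q}$ and $\tilde{p}$, the phase $\phi$ rescales to $d\phi$, and the induced relabeling $k\mapsto dk\bmod N$ of strand indices is a bijection precisely because $\gcd(N,d)=1$.

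The main obstacle I anticipate is being careful about strand labeling in the passage from ``$\mathbb{Z}/d\mathbb{Z}$-symmetric closed curve'' to ``$d$-th power in the braid group''. The rotation by $2\pi/d$ permutes the $N$ strand endpoints nontrivially (one computes that it acts as the cyclic shift $k\mapsto k+d^{-1}\bmod N$), so one must check that when the $d$ consecutive copies of $\beta$ are glued, these permutations thread together to reproduce the permutation induced by $B_{N,q,p}$ itself. This is automatic because everything unfolds inside a single closed curve, but it is the step that needs to be written down with care; once it is in hand, the identification of the reparametrized slice with $B_{N,\tilde{q},\tilde{p}}$ is a direct trigonometric computation.
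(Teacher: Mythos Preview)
Your proposal is correct and follows essentially the same route as the paper: divide the braid interval into $d$ equal pieces, reparametrize each by $s=dt$, and use $\gcd(N,d)=1$ so that $k\mapsto dk\bmod N$ relabels the strands bijectively, identifying each piece with $B_{N,\tilde{q},\tilde{p}}$ (with phase $d\phi$). The only cosmetic difference is that you phrase the periodicity as a $\mathbb{Z}/d\mathbb{Z}$ rotational symmetry of the closed curve in the cylinder, whereas the paper works directly with the strand functions $\psi_k$ on $[\eta,1+\eta]$; your anticipated ``main obstacle'' about strand labeling is exactly the point the paper disposes of with the one-line observation that $k\mapsto dk$ is a permutation.
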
 
	  
	  Since $\tilde{q}$ and $\tilde{p}$ are mutually prime and since the knot type does not change if we interchange $p$ and $q$, we make the	  \begin{ass}\label{assumption on the integers} 	The numbers $p$ and $q$ are mutually prime and $q$ is odd. 	\end{ass} In \S \ref{paragraph - description of the simple minimal braid}, we construct two braids $\alpha_{N,q,p}$ and $\beta_{N,q,p}$ of the form \begin{equation}\label{la premiere fois qu'on voit alpha et beta} \alpha_{N,q,p}=\prod_{2\leq 2k\leq N-1}\sigma_{2k}^{\pm}\ \ \ \ \beta_{N,q,p}=\prod_{1\leq 2k+1\leq N-1}\sigma_{2k+1}^{\pm} \end{equation} where the $\sigma_i$'s are the standard generators of the braid group ${\bf B}_N$ and the exponents $\pm 1$ of the $\sigma_i$'s appearing in $\alpha$ and $\beta$ are given by simple formulae in $N,q,p$.\\ \\
	  We will state below the Main Theorem which  expresses   the braid $B_{N,q,p}$ as a product of the braids $\alpha_{N,q,p}$, $\alpha_{N,q,p}^{-1}$, $\beta_{N,q,p}$ and $\beta_{N,q,p}^{-1}$ as follows : \begin{equation}\label{la premiere fois qu'on voit B comme un commutateur} B_{N,q,p}= Q_{N,q,p}\alpha_{N,q,p}Q_{N,q,p}^{-1}\beta_{N,q,p} \end{equation} where the $N$-braid $Q_{N,q,p}$ is also a product of $\alpha_{N,q,p}^{\pm 1}$'s and $\beta_{N,q,p}^{\pm 1}$'s. We illustrate the Main Theorem in \S \ref{section sur les exemples}   by going through the examples we gave in [S-V]  and we prove it in \S \ref{The structure of a simple minimal braid: proofs}. \\
	
	In the rest of the paper, we drop the Assumption \ref{assumption on the integers} and study the topology of the knot. In \S \ref{preliminaires sur le ribbon}, we prove a theorem stated by Lamm
	\begin{thm}\label{theoreme sur le ribbon}

		If $p$ and $q$ are mutually prime, the knot $K(N,q,p)$ is ribbon.
		\end{thm}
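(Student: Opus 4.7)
The plan is to exhibit an explicit immersed disk bounded by $K(N,q,p)$ in $\mathbb{R}^3$ whose only self-intersections are of ribbon type. The starting point is the parametrization (*) of Section 1.1.1, together with the symmetries of $\sin(qt)$ and $\cos(p(t+\phi))$ that are compatible precisely when $\gcd(q,p)=1$.

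First I would decompose the knot into its $2q$ monotone sub-arcs with respect to the height function $\sin(qt)$, cut at its critical points $\sin(qt)=\pm 1$. These arcs naturally pair up via the involution $t\mapsto -t+c$ (for a suitable $c$ depending on $\phi$), which reverses the sign of $\sin(qt)$ and hence reflects the $\rho$-coordinate $2+\sin(qt)$ about the torus $\rho=2$. This pairs each arc lying in the region $\rho>2$ with an arc in the region $\rho<2$. For each pair I would attach a band that crosses the torus $\rho=2$ radially at an appropriately chosen height. Performing all these band moves simultaneously reduces $K(N,q,p)$ to a trivial link, which bounds disjoint disks; the union of bands and capping disks is then an immersed disk with boundary $K(N,q,p)$.

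The main obstacle will be verifying that this immersed disk has only \emph{ribbon} singularities, i.e.\ that its double points form arcs with endpoints on one sheet but in the interior of the other, rather than closed loops. The heights and placements of bands and capping disks must be chosen so that every band--disk intersection is an arc of the correct type. This is precisely where the coprimality of $p$ and $q$ enters: the $z$-coordinate $\cos(p(t+\phi))$ separates the $2q$ paired arcs into $2q$ distinct height-profiles exactly when $\gcd(q,p)=1$, so the bands can be stacked at pairwise disjoint levels and the capping disks pushed to $|z|$ close to $1$ in a compatible manner. Once disjointness of heights is secured, the verification that each intersection is a ribbon arc reduces to a local check near each band.

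Finally, I would remark that if $\gcd(p,q)=d>1$, Proposition~\ref{ce qui nous mene a l'assumption} identifies $B_{N,q,p}$ as a $d$-th power, so the analogous construction encounters $d$-fold overlaps in the height-profiles and only produces a slice disk for the underlying knot obtained from $B_{N,\tilde{q},\tilde{p}}$, not a ribbon disk for $K(N,q,p)$ itself. This is consistent with the hypothesis of the theorem and clarifies the role of the coprimality assumption.
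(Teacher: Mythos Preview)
Your approach is genuinely different from the paper's, and as written it has a real gap.

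The paper does not work with the parametrization at all. It relies on the Main Theorem, which says that (under Assumption~\ref{assumption on the integers}) the braid $B_{N,q,p}$ has the very special shape
\[
B_{N,q,p}=Q\,\alpha\,Q^{-1}\,\beta,
\]
where $\alpha$ and $\beta$ are products of commuting generators $\sigma_{2k}^{\pm1}$ and $\sigma_{2k+1}^{\pm1}$ respectively. Replacing the $N-1$ half-twists coming from $\alpha$ and $\beta$ by trivial tangles turns the closed braid into the closure of $Q\cdot Q^{-1}$, an $N$-component link that is manifestly symmetric with respect to a plane. Proposition~\ref{noeud symetrique est ribbon} (any planar-symmetric knot or link is ribbon, via the linear interpolation $tX+(1-t)S(X)$) then furnishes $N$ ribbon disks with only ribbon-type mutual intersections, and one reattaches the $N-1$ half-twisted bands to get a single ribbon disk for $K(N,q,p)$. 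The coprimality of $p$ and $q$ enters only through Assumption~\ref{assumption on the integers}, i.e.\ through the very existence of the $Q\alpha Q^{-1}\beta$ decomposition.

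Your sketch, by contrast, tries to build the ribbon disk directly from the $2q$ sub-arcs of the Lissajous parametrization. The problem is the sentence ``Performing all these band moves simultaneously reduces $K(N,q,p)$ to a trivial link'': this is exactly the hard part, and you give no argument for it. Even the Euler-characteristic bookkeeping is unclear: $q$ bands on a knot followed by capping an unlink of $c$ components gives a surface of Euler characteristic $c-q$, so to obtain a disk you would need $c=q+1$, i.e.\ every one of your bands must split a component. Nothing in your description of the involution $t\mapsto -t+c$ or of the band placements forces this. Likewise the claim that ``$\cos(p(t+\phi))$ separates the $2q$ paired arcs into $2q$ distinct height-profiles exactly when $\gcd(q,p)=1$'' is not made precise, and in any case distinct heights for the arcs do not by themselves guarantee that the capping disks meet the bands only in ribbon arcs. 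In the paper's proof these two issues are handled simultaneously and for free by the planar symmetry of the link $L$; without an analogue of the $Q\cdots Q^{-1}$ structure you would need a genuinely new argument here.
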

		\begin{cor}\label{corollaire sur les noeuds periodiques}
		If $d=gcd(p,q)>1$, the knot $K(N,q,p)$ is periodic and its braid is the $d$-th power of a braid which closes in a ribbon knot.
	\end{cor}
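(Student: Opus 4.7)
The plan is to chain Proposition \ref{ce qui nous mene a l'assumption} and Theorem \ref{theoreme sur le ribbon}, both of which are at my disposal. Writing $\tilde{q}=q/d$ and $\tilde{p}=p/d$, one has $\gcd(\tilde{q},\tilde{p})=1$, and Proposition \ref{ce qui nous mene a l'assumption} yields
\begin{equation*}
B_{N,q,p}=B_{N,\tilde{q},\tilde{p}}^{d},
\end{equation*}
expressing $B_{N,q,p}$ as a $d$-th power. The closure of the base braid is by construction $K(N,\tilde{q},\tilde{p})$, which is ribbon by Theorem \ref{theoreme sur le ribbon} since $\tilde{p}$ and $\tilde{q}$ are coprime. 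This already takes care of the second half of the statement.

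For the periodicity claim, I would argue geometrically. Any closed $N$-braid sits in a standard solid torus $\mathbb{S}^1\times D^2\subset\mathbb{S}^3$, whose complementary circle is the braid axis. If a braid $\gamma$ is a $d$-th power, $\gamma=\delta^{d}$, then the rotation of the solid torus by angle $2\pi/d$ around its core sends the closed braid $\hat{\gamma}$ to itself, cyclically permuting the $d$ identical slices of the $d$-fold repeated pattern. This rotation extends to an orientation-preserving diffeomorphism of $\mathbb{S}^3$ of order $d$ whose fixed-point set is the braid axis, a circle disjoint from $\hat{\gamma}$. That is precisely the definition of $K(N,q,p)$ being periodic of period $d$.

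I do not anticipate any substantive obstacle: the corollary is a formal consequence of the stated proposition and theorem, together with the elementary observation that the closure of a braid which is a $d$-th power carries an obvious order-$d$ rotational symmetry. The only minor point to check is that this rotation of the solid torus really does extend to a periodic diffeomorphism of $\mathbb{S}^3$ with axis disjoint from the knot, which is a standard feature of the identification of closed braids with links in the complement of an unknot.
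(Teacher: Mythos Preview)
Your proposal is correct and is exactly the intended argument: the paper does not spell out a separate proof of the corollary, treating it as an immediate consequence of Proposition~\ref{ce qui nous mene a l'assumption} together with Theorem~\ref{theoreme sur le ribbon}, which is precisely the chain you describe. Your added remark on periodicity (the order-$d$ rotation of the solid torus about the braid axis) is the standard justification and matches what the paper leaves implicit.
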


	\begin{thm}
		\label{theoreme sur le four-genus}
		If $d=gcd(p,q)$, the four-genus of $K(N,q,p)$ verifies
\begin{equation}\label{inegalite pour le four genus}
		g_4(K(N,q,p))\leq\frac{(N-1)(d-1)}{2}.
		\end{equation}
	\end{thm}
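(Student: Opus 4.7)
My plan is to build a connected slice surface $F \subset B^4$ with $\partial F = K(N,q,p)$ of genus $\tfrac{(N-1)(d-1)}{2}$, by combining $d$ ribbon disks with a cobordism living in a collar $S^3 \times [0,1]$. Writing $\beta := B_{N,\tilde q,\tilde p}$, Proposition \ref{ce qui nous mene a l'assumption} gives $B_{N,q,p} = \beta^d$, and Theorem \ref{theoreme sur le ribbon} gives a ribbon disk $\Delta \subset B^4$ bounded by the ribbon knot $\hat\beta = K(N,\tilde q,\tilde p)$.

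The central step will be a \emph{fusion cobordism} lemma: for any pair of braids $\alpha, \gamma \in \mathbf{B}_N$, the links $\hat\alpha \sqcup \hat\gamma$ and $\widehat{\alpha\gamma}$ are related by a connected cobordism in $S^3 \times [0,1]$ realized by $N$ band surgeries, hence with Euler characteristic $-N$. To exhibit it, I would present $\widehat{\alpha\gamma}$ as the closure of the stacked braid $\alpha \cdot \gamma$ in a braid cylinder; at the meridian disk $D$ at the junction between $\alpha$ and $\gamma$, the $N$ braid strands meet $D$ in its interior while the $N$ external closure arcs cross the plane of $D$ just outside the cylinder. Attaching $N$ small horizontal bands at the level of $D$, each joining one interior braid strand to the adjacent exterior closure arc, performs the surgery that ``caps off'' the cylinder at $D$: the portion of the link above $D$ becomes $\hat\alpha$ and the portion below becomes $\hat\gamma$.

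Applying the fusion cobordism $d-1$ times, starting from $d$ disjoint copies of $\hat\beta$ placed in disjoint $3$-balls of $S^3$, I would iteratively merge them into closures of successive powers $\widehat{\beta^k}$ for $k = 2, 3, \dots, d$. This assembles a connected cobordism $\Sigma \subset S^3 \times [0,1]$ from $d \cdot \hat\beta$ to $\widehat{\beta^d} = K(N,q,p)$ using $(d-1)N$ saddles, so $\chi(\Sigma) = -(d-1)N$. Attaching the $d$ disjoint pushed-in copies of $\Delta$ to the $t=0$ end of $\Sigma$ produces a connected surface $F \subset B^4$ with $\partial F = K(N,q,p)$ and
\[
\chi(F) \;=\; \chi(\Sigma) + d \cdot \chi(\Delta) \;=\; -(d-1)N + d.
\]
Writing $c$ for the number of components of $K(N,q,p)$, the surface $F$ then has genus
$g = (2 - c - \chi(F))/2 = \tfrac{(N-1)(d-1) - (c-1)}{2} \leq \tfrac{(N-1)(d-1)}{2}$ (since $c \geq 1$), yielding the desired bound on $g_4(K(N,q,p))$.

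The delicate step will be justifying the fusion cobordism: one must verify that the $N$ horizontal bands at level $D$ can be chosen disjointly in $S^3$ and that surgery along them really separates the link into $\hat\alpha$ above $D$ and $\hat\gamma$ below $D$. The count of exactly $N$ bands is tight, since each of the $N$ braid strands through $D$ requires its own band to be rerouted into the corresponding closure arc; once the bands are properly arranged, verifying the local separation at $D$ is a direct check.
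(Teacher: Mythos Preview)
Your proposal is correct and follows essentially the same approach as the paper: both use Proposition~\ref{ce qui nous mene a l'assumption} to write $B_{N,q,p}=\beta^d$, invoke Theorem~\ref{theoreme sur le ribbon} to get ribbon disks for $\hat\beta$, and then iterate $(d-1)$ times a cobordism of Euler characteristic $-N$ between $\widehat{b_1b_2}$ and $\hat b_1\sqcup\hat b_2$. The paper simply cites Brandenbursky--K\k{e}dra [B-K] for this cobordism, whereas you sketch its construction explicitly as $N$ disjoint band surgeries at the meridional level between the two braid blocks; otherwise the arguments and the resulting Euler-characteristic count $\chi(F)=-(d-1)N+d=1-(N-1)(d-1)$ coincide.
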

		\begin{rmk}
		 The right-hand side of (\ref{inegalite pour le four genus}) is the genus of the $K(N,d)$-torus knot (cf.  [K-M]).
		\end{rmk}
		
		\begin{rmk} 
		The inequality (\ref{inegalite pour le four genus}) can be strict: for example the knot $K(3,5,10)$ is $10_{123}$ which is slice.
		\end{rmk}
		There is one case where we know that  (\ref{inegalite pour le four genus}) is an equality:
		\begin{prop}\label{four genus; egalite} Let $N,q,p$ be positive integers with $(N,q)=(N,p)=1$, $d=gcg(q,p)$ and let 
			\begin{equation} \tilde{p}=\frac{p}{d}\ \ \ \ \ \tilde{q}=\frac{q}{d} 
			\end{equation} 
			\begin{equation}\label{congruence} \mbox{If}\ \ \ \ \ \  \tilde{p}+\tilde{q}\equiv 0\ (2N)\ \ \ \mbox{or}\ \ \ \ \ \  \tilde{p}-\tilde{q}\equiv  0\ (2N) 
			\end{equation} the knot $K(N,q,p)$ is represented by a quasipositive braid and its $4$-genus is \begin{equation}\label{egalite pour le genre} g_4\big(K(N,q,p)\big)=\frac{(N-1)(d-1)}{2} 
			\end{equation} 
			 \end{prop}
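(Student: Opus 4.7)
The plan is to show that, under the congruence hypothesis (\ref{congruence}), the braid $B_{N,q,p}$ is quasipositive with exactly $d(N-1)$ quasipositive factors, and then to invoke the classical consequence of Rudolph's slice--Bennequin inequality together with Kronheimer--Mrowka's resolution of the Milnor conjecture: the closure of a quasipositive $N$-braid written as a product of $k$ conjugates of positive standard generators, when it is a knot, has four-genus exactly $\frac{k-N+1}{2}$. Since $\frac{d(N-1)-N+1}{2}=\frac{(N-1)(d-1)}{2}$, this matches the upper bound of Theorem \ref{theoreme sur le four-genus} and yields (\ref{egalite pour le genre}).

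The first step is to reduce to the coprime case. By Proposition \ref{ce qui nous mene a l'assumption} we have $B_{N,q,p}=B_{N,\tilde{q},\tilde{p}}^{d}$, so it is enough to exhibit a quasipositive factorization of $B_{N,\tilde{q},\tilde{p}}$ with $N-1$ factors; concatenating $d$ copies then produces a quasipositive factorization of $B_{N,q,p}$ with $d(N-1)$ factors.

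The second step uses the Main Theorem to write
\[
B_{N,\tilde{q},\tilde{p}}=Q_{N,\tilde{q},\tilde{p}}\,\alpha_{N,\tilde{q},\tilde{p}}\,Q_{N,\tilde{q},\tilde{p}}^{-1}\,\beta_{N,\tilde{q},\tilde{p}},
\]
where, by (\ref{la premiere fois qu'on voit alpha et beta}), $\alpha_{N,\tilde{q},\tilde{p}}$ is a product of $\lfloor (N-1)/2\rfloor$ generators of the form $\sigma_{2k}^{\pm 1}$ (on even indices) and $\beta_{N,\tilde{q},\tilde{p}}$ is a product of $\lceil (N-1)/2\rceil$ generators of the form $\sigma_{2k+1}^{\pm 1}$ (on odd indices). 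The core combinatorial step---and the main obstacle---is to verify, using the explicit formulas for the signs of these exponents set up in \S \ref{paragraph - description of the simple minimal braid}, that each exponent equals $+1$ as soon as $\tilde{p}\pm\tilde{q}\equiv 0\pmod{2N}$. Once this is checked, $\alpha_{N,\tilde{q},\tilde{p}}$ and $\beta_{N,\tilde{q},\tilde{p}}$ are genuine positive words on disjoint sets of generators, and the factorization
\[
B_{N,\tilde{q},\tilde{p}}=(Q\sigma_{i_1}Q^{-1})\cdots(Q\sigma_{i_s}Q^{-1})\,\sigma_{j_1}\cdots\sigma_{j_t},\qquad s+t=N-1,
\]
is manifestly quasipositive with $N-1$ factors.

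Applying the previous reduction, $B_{N,q,p}$ is quasipositive with $d(N-1)$ factors, so it is represented by a quasipositive braid as asserted. The Rudolph/Kronheimer--Mrowka formula then yields $g_4(K(N,q,p))\geq \frac{d(N-1)-N+1}{2}=\frac{(N-1)(d-1)}{2}$, matching the upper bound of Theorem \ref{theoreme sur le four-genus}. The entire proof thus reduces to the sign verification in step two; the rest is bookkeeping plus known machinery.
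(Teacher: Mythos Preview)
Your proposal is essentially the paper's own proof: reduce to the coprime case via $B_{N,q,p}=B_{N,\tilde q,\tilde p}^{\,d}$, use the Main Theorem to write $B_{N,\tilde q,\tilde p}=Q\alpha Q^{-1}\beta$, check that the congruence (\ref{congruence}) forces all the $\epsilon$-signs in $\alpha$ and $\beta$ to agree, and then apply Rudolph's theorem to count $d(N-1)$ quasipositive factors on $N$ strands. One small correction: the exponents are not always $+1$ but merely all of the \emph{same} sign (for instance $\tilde p+\tilde q\equiv 0\pmod{2N}$ gives $\tilde p B\equiv -1\pmod{2N}$ and hence $\epsilon(i)=-1$ for every $i$), so you must invoke the mirror-image freedom---exactly as the paper does---before concluding quasipositivity.
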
 

	Finally, replacing  $t$ by $t+\pi$ in the expression of $K(N,q,p)$ given in \S \ref{a description in R3} yields
	\begin{prop}
		If $p$ and $q$ have different parities (and thus $N$ is odd), then $K(N,q,p)$ is preserved by the involution
		$$(x,y,z)\mapsto (-x,-y,-z).$$
		Hence it is positive strongly amphicheiral. 
	\end{prop}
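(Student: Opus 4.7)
The plan is to carry out the substitution suggested in the preamble: replace $t$ by $t+\pi$ in the Cartesian parametrization $(*)$ of $K(N,q,p)$ and verify that the new parametrization equals $-\gamma(t)$ coordinatewise, so that the subset $K(N,q,p)\subset\mathbb R^3$ is literally invariant under the central involution $I:(x,y,z)\mapsto(-x,-y,-z)$.

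Concretely, I would record the three identities
\[
\cos(N(t+\pi))=(-1)^N\cos(Nt),\qquad \sin(q(t+\pi))=(-1)^q\sin(qt),\qquad \cos(p(t+\phi+\pi))=(-1)^p\cos(p(t+\phi)),
\]
and observe that the coprimality $(N,p)=(N,q)=1$ combined with the fact that one of $p,q$ is even forces $N$ to be odd, so $(-1)^N=-1$. Plugging into $(*)$ in the case $q$ even, $p$ odd, the radial factor $2+\sin(qt)$ is preserved while the two angular factors and the $z$-coordinate each pick up a minus sign, giving $\gamma(t+\pi)=-\gamma(t)$ pointwise. The companion case $q$ odd, $p$ even is the symmetric situation and is handled by the analogous parity bookkeeping after absorbing an appropriate $\pi/2$-phase shift into the Lissajous parametrization so that the roles of $\sin$ and $\cos$ are interchanged.

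To unpack the last sentence of the proposition, note that $I$ has determinant $-1$ on $\mathbb R^3$ and is therefore orientation-reversing, so $I(K)=K$ is exactly the statement that $K(N,q,p)$ equals its own mirror image, i.e.\ $K$ is amphicheiral. Since $I$ is an involution of the ambient space the symmetry is \emph{strong}, and since the reparametrization it induces on $K$ is the orientation-preserving rotation $t\mapsto t+\pi$ of the parameter circle, $I$ preserves the orientation of the knot, which is what \emph{positive} amphicheirality means. The only genuinely technical point is the parity bookkeeping---keeping careful track of the signs $(-1)^N,(-1)^q,(-1)^p$ to check that exactly the right coordinates flip---and this is precisely the content of the ``different parities'' hypothesis.
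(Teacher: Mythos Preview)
Your approach is exactly the paper's: the entire proof in the paper is the single remark ``replacing $t$ by $t+\pi$ in the expression $(*)$'', and your coordinate-by-coordinate verification in the case $q$ even, $p$ odd, $N$ odd is correct and more explicit than what the paper writes. Your unpacking of ``positive strongly amphicheiral'' is also accurate.

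There is, however, a gap in your treatment of the companion case $q$ odd, $p$ even. A $\pi/2$-phase shift in the Lissajous factor does \emph{not} repair the argument: for any phase $\psi$ one has $\sin\big(q(t+\pi)+\psi\big)=(-1)^q\sin(qt+\psi)$, so when $q$ is odd the radial factor becomes $2-\sin(qt+\psi)$ rather than $2+\sin(qt+\psi)$, and $\gamma(t+\pi)\neq -\gamma(t)$. In fact one can check directly that with $q$ odd the embedded curve $(*)$ is not set-wise invariant under $(x,y,z)\mapsto(-x,-y,-z)$ for any choice of phase: matching the $xy$-components forces $s-t\in\frac{\pi}{N}(2\mathbb Z+1)$ and $\sin(qs)=\sin(qt)$, and these are incompatible for generic $t$ when $q$ is odd. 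So swapping $\sin$ and $\cos$ via a phase is not the right move here.

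The clean way to handle this case is the one the paper implicitly relies on (stated just before Assumption~1): the knot type does not change if one interchanges $p$ and $q$. Since exactly one of $p,q$ is even, one may take a representative of $K(N,q,p)$ with the even integer playing the role of $q$ in $(*)$, and then your computation for the first case applies verbatim. The paper's one-line proof glosses over this reduction as well, so the oversight is minor, but your phrase ``$\pi/2$-phase shift'' should be replaced by an appeal to the $p\leftrightarrow q$ symmetry.
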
  
	Some of the $K(N,q,p)$'s are actually trivial knots; in \S \ref{la section sur les noeuds triviaux} show:
\begin{prop}\label{liste de noeuds triviaux}
	If $N$ and $q$ are mutually prime, the knots $K(N,q,q+N)$, $K(N,q,1)$, $K(N,q,2Nq+1)$ and $K(N,q,2Nq-1)$ are trivial.
\end{prop}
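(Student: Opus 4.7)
The four cases are handled separately.

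\emph{Case $p=1$.} The $z$-coordinate $\cos(t+\phi)$ of the parametrization $(*)$ has exactly two critical points on $[0,2\pi)$: a maximum at $t=-\phi$ and a minimum at $t=\pi-\phi$. Hence $K(N,q,1)$ is presented in $1$-bridge position with respect to the height function $z$, and any knot in $1$-bridge position is the unknot. (For generic $\phi$ one checks that the parametrization is actually an embedding, so no degeneracies spoil the argument.)

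\emph{Cases $p=q+N$ and $p=2Nq\pm1$.} Here $z(t)=\cos(p(t+\phi))$ has $2p$ critical points, so the one-bridge argument no longer applies, and one must exhibit an explicit ambient isotopy of $\mathbb{R}^3$ from $K(N,q,p)$ to $K(N,q,1)$. The guiding identities are
\[
(2Nq\pm1)(t+\phi)=\pm(t+\phi)+2q\theta+2Nq\phi
\]
and
\[
(q+N)(t+\phi)=qt+\theta+(q+N)\phi,\qquad\theta=Nt.
\]
In each case the $\theta$-dependent summand can be interpreted as a twisting of the $(y,z)$-plane as one traverses the longitude of the solid torus $\mathbb{S}^1\times D^2$ containing the knot; undoing this twist should bring the $z$-coordinate back to $\pm\cos(t+\phi')$ or an equivalent low-frequency form, reducing the problem to the already-settled $p=1$ case.

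\emph{Main obstacle.} The delicate point is that a pure meridional Dehn twist of the solid torus is not nullhomotopic in its complement in $S^3$, so naive ``untwisting'' is not an ambient isotopy of $\mathbb{R}^3$. One therefore has to either combine the twist with a compensating longitudinal rotation to produce an honest isotopy, or --- probably more cleanly --- work at the braid level: use the Main Theorem to write $B_{N,q,p}$ as a short word in $\alpha_{N,q,p}^{\pm1}$ and $\beta_{N,q,p}^{\pm1}$, and verify that for the four listed values of $p$ the explicit formulas for the exponents of $\sigma_i$ in $\alpha$ and $\beta$ collapse enough that the braid word reduces, under the braid relations and Markov moves, to the trivial $N$-braid. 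I expect this braid-word verification to be the cleanest uniform route for the three remaining cases.
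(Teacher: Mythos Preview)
Your argument for $p=1$ via bridge number is correct and is actually more elementary than the paper's approach: the paper instead notes (using the symmetry $K(N,q,1)\cong K(N,1,q)$) that each braid generator $\sigma_i^{\pm 1}$ appears exactly once in $B_{N,1,p}$, whereas you extract the result directly from the height function. This is a genuine improvement for that one case.

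The remaining three cases, however, are not proved in your proposal --- you have only sketched two possible strategies and correctly identified the obstacle (a meridional Dehn twist is not an ambient isotopy). The paper fills these gaps as follows.

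For $p=2Nq\pm 1$ the paper does exactly what your intuition suggests, but at the level of knot type rather than via an explicit ambient isotopy: it invokes the fact that $K(N,q,k)$ and $K(N,q,2Nq+k)$ are isotopic while $K(N,q,k)$ and $K(N,q,2Nq-k)$ are mirror images, reducing immediately to $p=1$. You would need to supply this periodicity statement; it can be read off from the braid description, since $p$ enters the crossing-sign formulas only through the quantities $\epsilon_{N,q,p}(i)=(-1)^{[pBi/N]}$ and $\lambda_{N,q,p}(k)=(-1)^{[2Apk/q]}$, and replacing $p$ by $2Nq\pm p$ changes these at most by a global sign.

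For $p=q+N$ there is real work to do, and your proposal does not do it. The paper computes from the Main Theorem that $\epsilon(i)=(-1)^i$ and $\lambda(k)=(-1)^k$, giving
\[
B_{N,q,q+N}=A(BA)^{(q-1)/2}(B^{-1}A^{-1})^{(q-1)/2}B^{-1},
\qquad A=\prod_{2k}\sigma_{2k},\quad B=\prod_{2k+1}\sigma_{2k+1}.
\]
It then shows this closes to the unknot by a combinatorial strand-ordering argument: in the auxiliary pure braid $\mathcal{B}_N=(BA)^k(B^{-1}A^{-1})^k$ (for $N=2k$; a similar word for $N$ odd) the $j$-th strand lies above the $i$-th strand at every crossing whenever $j<i$, so $\mathcal{B}_N$ is trivial, and $B_{N,q,q+N}$ is a subword inheriting the same over/under structure. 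This ordering argument is the missing idea in your plan; a generic ``collapse under braid relations and Markov moves'' does not materialize without it.
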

 Can we get all the trivial $K(N,q,p)$'s this way? We did computer simulations using the braid software from the Liverpool knot group ([br]) and KnotPlot ([KP]): they told us that in some cases (the $K(4,5,.)$'s for example) the answer is yes but in most cases the answer is no (see the lists of Jones polynomials at the end of the paper).

    \section*{Acknowledgment} 	We are grateful to Moshe Cohen whose stimulating conversation prompted us to embark on this work. \section{The structure of a simple minimal braid}\label{paragraph - description of the simple minimal braid} \subsection{Overview} 

   Here is an informal description of the contents of the Main Theorem.\\ \\ There are $2q$ values of $t$ in $[\eta, 1+\eta]$ (we call them {\it crossing values}) above which two or more of the $N$ graphs forming $B_{N,q,p}$ meet (at {\it crossing points}) and the data of all these crossing points make up the braid (see Figure \ref{premiere image de tresse}); above each of the crossing values $t$'s, the generators of the braid group ${\bf B}_N$ describing the corresponding crossing points are all even (i.e. of the form $\sigma_{2k}^{\pm 1}$) or all odd (i.e. of the form $\sigma_{2k+1}^{\pm 1}$).\\ \\ The set of crossing points of $B_{N,q,p}$ above a crossing value $t$ can be represented  by one of the braids: $\alpha_{N,q,p}$, $\alpha_{N,q,p}^{-1}$, $\beta_{N,q,p}$ or $\beta_{N,q,p}^{-1}$ which were introduced in formula (\ref{la premiere fois qu'on voit alpha et beta}); thus $B_{N,q,p}$ is a product of the $\alpha_{N,q,p}$'s and $\beta_{N,q,p}$'s and of their inverses.\\ We order the $2q$ crossing values $t_1<t_2<...<t_q<...<t_{2q}$. Going from $t_k$ to $t_{k+1}$ changes $\alpha_{N,q,p}^{\pm 1}$ into $\beta_{N,q,p}^{\pm 1}$ or vice-versa. A formula gives us the exponent $+1$ or $-1$ of the $\alpha_{N,q,p}$ or $\beta_{N,q,p}$ above a given crossing point $t_k$ in terms only of $N,q,p$ and $k$.\\ Finally we notice that, if we have an $\alpha_{N,q,p}$ (resp. $\alpha_{N,q,p}^{-1}$, $\beta_{N,q,p}$, $\beta_{N,q,p}^{-1}$) for $t_k$ (with $k\neq q$), we have a $\alpha_{N,q,p}^{-1}$ (resp. $\alpha_{N,q,p}$, $\beta_{N,q,p}^{-1}$, $\beta_{N,q,p}$) for $t_{2q-k}$: this explains the presence of $Q_{N,q,p}$ and $Q_{N,q,p}^{-1}$ in the product (\ref{la premiere fois qu'on voit B comme un commutateur}).
 \subsection{Statement of the structure theorem}
 \label{subsection: Statement of the structure theorem}

 	  \begin{mainthm}\label{structure generale} Let $N,p,q$ be three integers such that $q$ is odd and $(p,q)=(N,q)=(N,p)=1$; and let $A$, $B$ two integers such that 
 \begin{equation} \label{definition des A et B} 2NA+Bq=1 
 \end{equation} For  $i\in\{1,...,N-1\}$, we let \begin{equation}\label{definition d'epsilon avec le signe de Conway} \epsilon_{N,q,p}(i)=(-1)^{[\frac{pBi}{N}]} \end{equation} 
 where $[\ ]$ denotes the integral part and we define \begin{equation} \label{deuxieme apparition de alpha et beta} \alpha_{N,q,p}=\prod_{2\leq 2i\leq N-1}\sigma_{2i}^{\epsilon_{N,q,p}(2i)}\ \ \ \beta_{N,q,p}=\prod_{1\leq 2i+1\leq N-1}\sigma_{2i+1}^{\epsilon_{N,q,p}(2i+1)} \end{equation} For  $k\in\{1,...,2q\}$, $k\neq q, k\neq 2q$, we let \begin{equation}\label{definition de lambda avec le signe de Conway} \lambda_{N,q,p}(k)=(-1)^{[\frac{2Apk}{q}]} 
 \end{equation} Up to mirror transformation, the knot $K(N,q,p)$ is represented by the braid 
 \begin{equation} 	\label{expression comme un commutateur} 	B_{N,q,p}=\underbrace{\alpha_{N,q,p}^{\lambda(1)}\beta_{N,q,p}^{\lambda(2)}... \alpha_{N,q,p}^{\lambda(q-2)}\beta_{N,q,p}^{\lambda(q-1)}}_{Q_{N,q,p}}\alpha_{N,q,p}\underbrace{\beta_{N,q,p}^{-\lambda(q-1)}\alpha_{N,q,p}^{-\lambda(q-2)}...\beta_{N,q,p}^{-\lambda(2)}\alpha_{N,q,p}^{-\lambda(1)}}_{Q_{N,q,p}^{-1}}\beta_{N,q,p} 	\end{equation}
  The $k$-th factor in this expression corresponds to the $k$-th crossing value $t_k$. \end{mainthm} 
 Notice that the arithmetic formulae (\ref{definition d'epsilon avec le signe de Conway}) and (\ref{definition de lambda avec le signe de Conway}) can be written in terms of the Conway sign:
  \begin{defn} ([Co])	\label{definition du signe de Conway} If $m$ and $n$ are two  integers, $n$ is said to be {\it positive} (resp. {\it negative}) modulo $m$ if $n$ is congruent to an integer inside $(0,\frac{m}{2})$ (resp. $(0,-\frac{m}{2})$).
  \end{defn}

\section{Illustrations and examples}\label{section sur les exemples} 
In this section we go through the examples featured in [S-V] and we write their braid using the terminology  of the Main Theorem.\\
We define three permutations of the crossing values $t_k, \ k\in \{ 1,\ldots, 2q\}$ 
and their  corresponding action  on the  blocks  $\alpha$ and $\beta$ :

\begin{equation}
\begin{array}{l}
T(k) = k + q,  S(k) = 2q-k , R(k) = q-k  \\
T : \alpha \mapsto \beta,  \beta \mapsto \alpha\\
S : \alpha \mapsto \alpha^{-1},  \beta \mapsto \beta^{-1}\\
R : \alpha \mapsto \beta^{-1},  \beta \mapsto \alpha^{-1}
\end{array}
\end{equation}



 %

 Since it is clear in each case of the following list what the $N,q,p$ are, we dropped the indices $N,q,p$.
 \begin{itemize} 
 	\item $N=3, q=4, p=5$: square knot $3_1\# \bar{3}_1$ $$Q\sigma_2^{-1}Q^{-1}\sigma_1\ \ \ \ \ \mbox{where}\ \ \ \ \ Q=\sigma_2\sigma_1^{-1}\sigma_2^{-1}\sigma_1$$ 
 	\item $N=3, q=4, p=7$: trivial knot $$Q\sigma_2Q^{-1}\sigma_1^{-1}\ \ \ \ \ \mbox{where}\ \ \ \ \ Q=(\sigma_2^{-1}\sigma_1^{-1}\sigma_2^{-1})^2$$ 
 	\item $N=3, q=4, p=10$: figure eight knot $$B_{3,4,10}=B_{3,2,5}^2=
 	(Q\sigma_2Q^{-1}\sigma_1^{-1})^2 \ \ \ \ \mbox{where}\ \ \ \ \ Q=\sigma_2\sigma_1\sigma_2\sigma_1$$ 
 	\item $N=3, q=5, p=7$: $10_{155}$ 
 	$$Q\sigma_2^{-1}Q^{-1}\sigma_1^{-1}\ \ \ \ \ \mbox{where}\ \ \ \ \ Q=\sigma_2^{-1}\sigma_1\sigma_2^{-1}\sigma_1$$ 
 	Note that this knot verifies the assumptions of Theorem \ref{four genus; egalite}. 
 	\item $N=3, q=5, p=10$: $10_{123}$ $$B_{3,5,10}=B_{3,1,2}^5=(\sigma_2^{-1}\sigma_1)^5$$ 
 	\item $N=3, q=7, p=8$: $5_1\# \bar{5}_1$ $$Q\sigma_2^{-1}Q^{-1}\sigma_1\ \ \ \ \ \mbox{where}\ \ \ \ \ Q=\sigma_2\sigma_1^{-1}\sigma_2\sigma_1\sigma_2^{-1}\sigma_1$$ 
 	\item $N=3, q=7, p=19$: $14N11995$ $$Q\sigma_2Q^{-1}\sigma_1\ \ \ \ \ \mbox{where}\ \ \ \ \ Q=\sigma_2\sigma_1^{-1}\sigma_2^{-1}\sigma_1\sigma_2\sigma_1^{-1}$$ 
 	\item $N=4, q=5, p=7$: $5_2\# \bar{5}_2$
 	$$Q\alpha Q^{-1}\beta\ \ \ \ \ \mbox{where}\ \ \ \alpha=\sigma_2^{-1}\ \ \ \ \beta=\sigma_1\sigma_3\ \ \ \ Q=\alpha^{-1}\beta^{-1}\alpha\beta$$ 
 	\item $N=4, q=5, p=13$: $9_{46}$
 	$$Q\alpha Q^{-1}\beta\ \ \ \ \ \mbox{where}\ \ \ \alpha=\sigma_2\ \ \ \ \beta=\sigma_1\sigma_3\ \ \ \ Q=\alpha\beta\alpha^{-1}\beta^{-1}$$ 
 	\item $N=5, q=6, p=22$: $7_7$ $$B_{5,6,22}=B_{5,3,11}^2=(Q\alpha Q^{-1}\beta)^2\ \ \ \ \ \mbox{where}\ \ \ \alpha=\sigma_2\sigma_4^{-1}\ \ \ \ \beta=\sigma_1^{-1}\sigma_3\ \ \ \ Q=\alpha^{-1}\beta$$ 
 \end{itemize}

 \begin{figure}[!h]
 	\begin{center} 
 			\includegraphics[angle=-90, scale = .4 ]{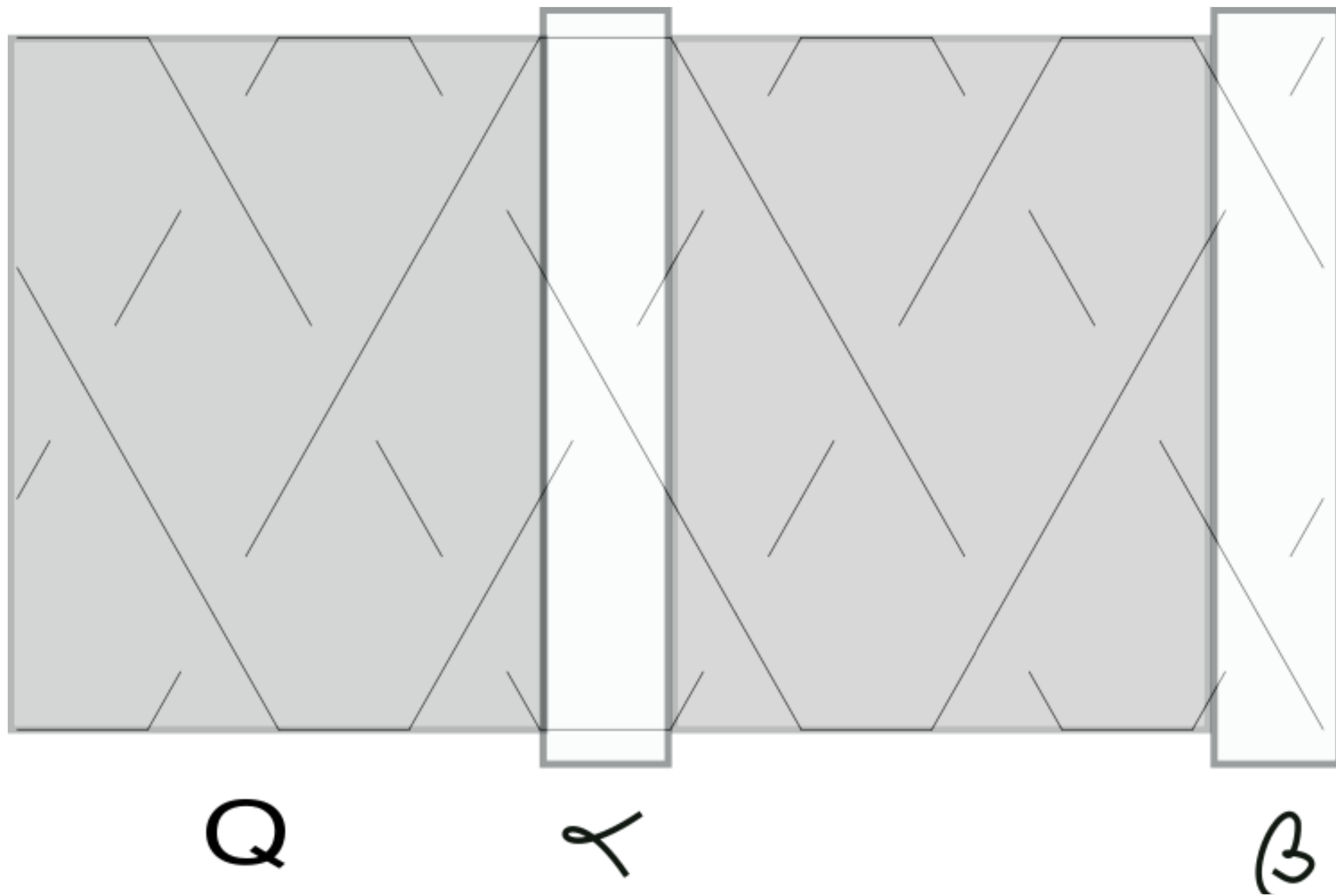}

 		\caption{   $B_{4,5,13}$ , $A=4$, $\alpha=\sigma_2, \beta=\sigma_1\sigma_3$, $\lambda (k) = (-1)^{[ \frac{2k}{5} ]}$} 
 			
 	\end{center}
 \end{figure}

 \begin{figure}[h!]
 	\begin{center}
 		
 \includegraphics[scale = .4]{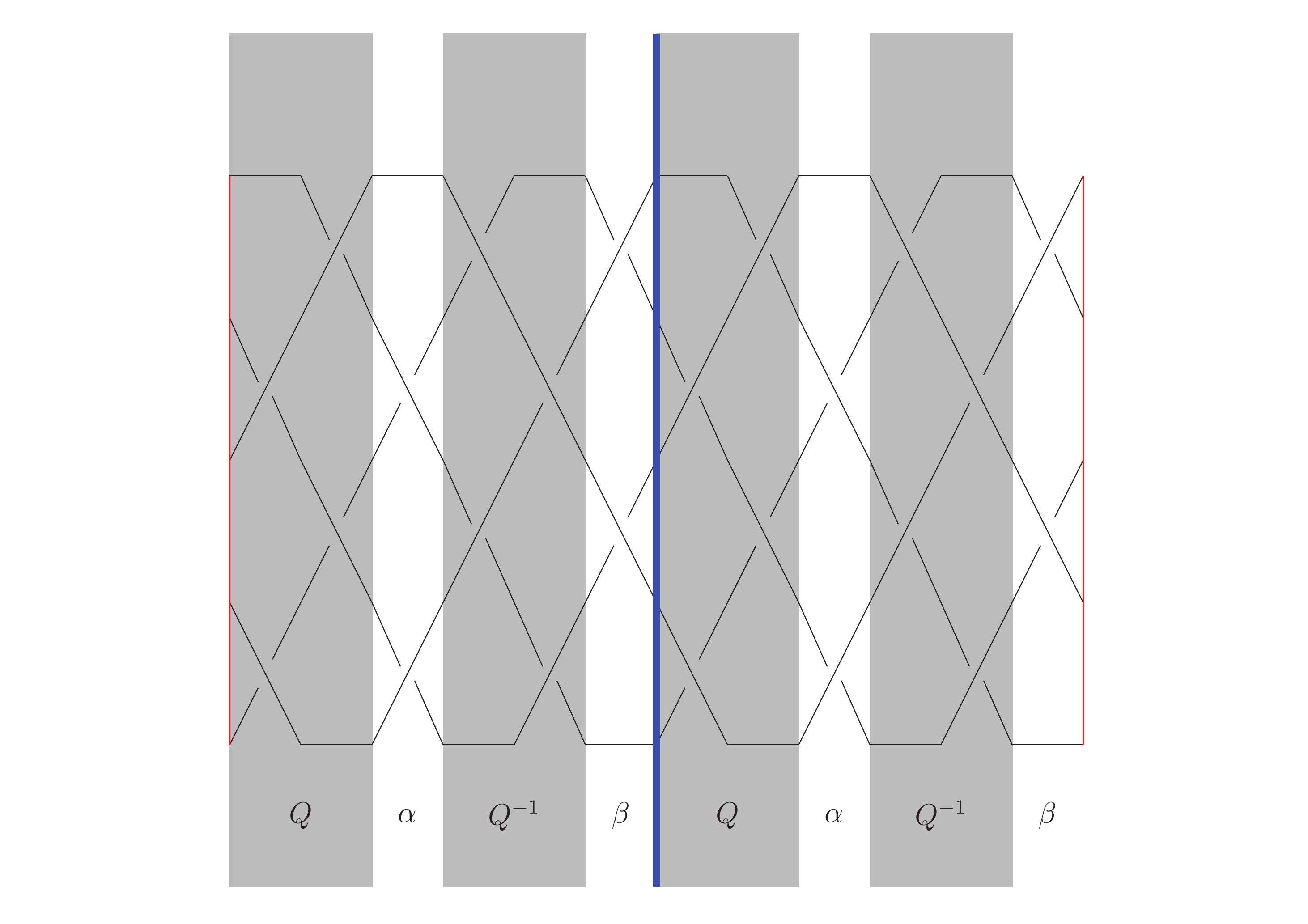} 
 
 \caption{ $B_{5,6,22} = B_{5,3,11}^2$, $\alpha=\sigma_2\sigma_4^{-1}$, $\beta=\sigma_1^{-1}\sigma_3$} 
\end{center} 
\end{figure} 
\clearpage
\section{Proof of the Main Theorem on the simple minimal braid $B_{N,q,p}$}\label{The structure of a simple minimal braid: proofs}

 \begin{figure}[!h]
 	\begin{center} 
 		\fbox{ \includegraphics[scale=.3]{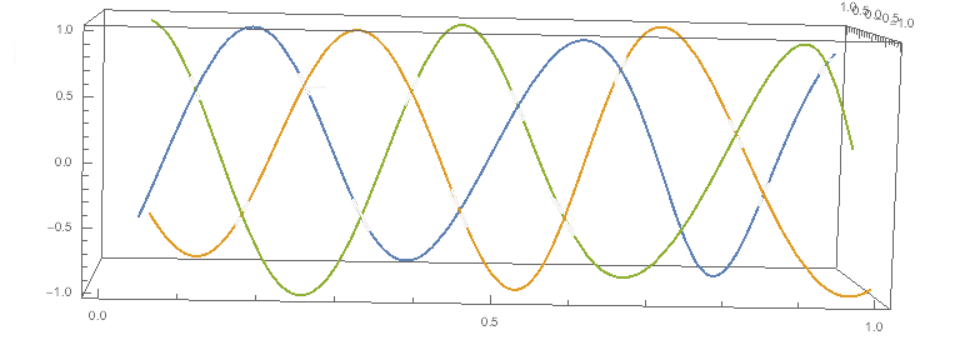}  
 		} 
 		\caption{Graph of $\{\psi_k\}_{,k=1,2,3}$ of knot   $K(3,7,5)$} 
 	\end{center} \end{figure}

 	 We recall some facts from [S-V]. We endow $\mathbb{R}^3$ with coordinates $(t,y,z)$:  the braid is the collections of the graphs in $\mathbb{R}^3$ of the functions $\psi_k$ for $k=1,...,N$: $$\psi_k=(\psi_k^{(1)},  \psi_k^{(2)}):[\eta,1+\eta]\longrightarrow\mathbb{R}^2$$ \begin{equation}\label{definition de la tresse} t\mapsto (y,z)=\big(\psi_k^{(1)}(t), \psi_k^{(2)}(t)\big)=\Big(\sin \frac{2\pi q}{N}(t+k), \cos \frac{2\pi p}{N}\big(t+k+\phi\big)\Big) \end{equation} 
 \begin{figure}[!h]\label{image de la tresse 357}
 	\begin{center} 
 		\fbox{
 			\includegraphics[scale=.8]{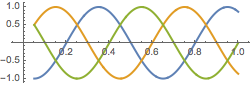}
 			\includegraphics[scale=.6]{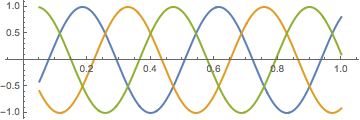} 
 		} 
 		\caption{Graph of $\{\psi_k^{(1)}\}_{k=1,2,3}$  and $\{\psi_k^{(2)}\}_{k=1,2,3}$ of knot   $K(3,7,5)$} 
 	\end{center} \end{figure}

\subsection{Periodic braids: proof of Proposition \ref{ce qui nous mene a l'assumption}}\label{preuve de la proposition sur les tresses periodiques}
We now prove Proposition \ref{ce qui nous mene a l'assumption} (stated in \ref{subsection: Contents of the paper}). We divide the interval $[\eta,1+\eta]$ into $d$ intervals $$I_n=[\frac{n}{d}+\eta,\frac{n+1}{d}+\eta],\ \ \ \ \ \ \ n=0,...,d-1.$$ After a change of variables $t\mapsto s=dt$, we see that the braid above an interval $I_n$ consists in the collection of graphs of the functions $[d\eta,1+d\eta]\longrightarrow \mathbb{R}^2$ \begin{equation} \label{apres le changement de variables} s\mapsto \Big(\sin \frac{2\pi \tilde{q}}{N}(s+dk), \cos \frac{2\pi \tilde{p}}{N}\big(s+dk+d\phi\big)\Big) \end{equation} Since $(N,d)=1$, the map $\ k\mapsto kd\ (mod\ N)\ $ induces a permutation of $\{1,...,N-1\}$; hence the piece of $B_{N,q,p}$ above $I_n$ is the collection of graphs above $[\eta,1+\eta]$ of the functions $$s\mapsto \Big(\sin \frac{2\pi \tilde{q}}{N}(s+k), \cos \frac{2\pi \tilde{p}}{N}\big(s+k+d\phi\big)\Big)$$ i.e. it is the braid $B_{N,\tilde{q},\tilde{p}}$, representing the knot $K(N,\tilde{q},\tilde{p},d\phi)$; this proves Proposition \ref{ce qui nous mene a l'assumption}.\ \ \ \qed\\ 
\subsection{Crossing values and crossing points of the braid}\label{la ou on mentionne les phases critiques}
The {\it braid shadow} is the projection of the braid onto the first two components $(t,y)$ of $\mathbb{R}^2$ i.e. the collection of the graphs of the $\psi_k^{(1)}$'s.\\ A {\it crossing point} $P$ of the braid  is the data of two different integers, $k$, $l$ with $0\leq k,l\leq N-1$ and a number $t\in [\eta, 1+\eta]$ called a {\it crossing value} such that $$\psi_k^{(1)}(t)=\psi_l^{(1)}(t)\ \ \ \ \  \mbox{i.e.}\ \ \ \ \ \ \sin \big(\frac{2\pi}{N}q(t+k)\big)=\sin \big(\frac{2\pi}{N}q(t+l)\big).$$ There is a  total of $(N-1)q$ crossing points, as in the case of the $(N,q)$ torus knot (where $q=p$).\\ A straightforward computation (cf. [S-V]) shows that, for a crossing point $P$ between the $k$-th and $l$-th strands of $B_{N,q,p}$, the corresponding crossing value $t$ verifies for some integer $m$ \begin{equation}\label{equ:valeur de t} t=-\frac{k+l}{2} + \frac{N}{4q}(2m+1) \end{equation}
 
 	 The sign $\Sigma(P)$ of a crossing point $P$ is
 	 \begin{figure}[!h]
 	 	\begin{center} 
			 \includegraphics[scale=.4]{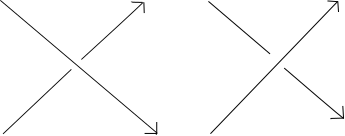} 
 	 		\caption{   crossing numbers   $+1 $ and  $-1$} 
 	 	\end{center} \end{figure}

\begin{equation}
\label{definition du signe d'un crossing point}
\Sigma(P)=\mbox{sign of}\ \ \ \big(\psi_k^{(2)}(t)-\psi_l^{(2)}(t)\big)\big(\psi_l^{(1)\prime}(t)-\psi_k^{(1)\prime}(t)\big)
\end{equation}

In [S-V], we computed this $\Sigma(P)$ as: \begin{equation}\label{formula pour le signe} \Sigma(P)=(-1)^{m}(-1)^{[p\frac{m}{q}+\frac{p}{2q}+ 	\frac{2p\phi}{N}]}(-1)^{[p\frac{k-l}{N}]}(-1)^{[q\frac{k-l}{N}]} \end{equation} where $[\ \ ]$ denotes the integral part. \subsubsection{Determination of the crossing points above a given crossing value} Let $t$ be a crossing value of $B_{N,q,p}$. We look for the $y$'s such that $(t,y)$ is a crossing point of the braid
shadow.\\ 
We derive from (\ref{equ:valeur de t}) the existence of at least one ordered pair of integers $(m,\mathfrak{s})$ such that \begin{equation}\label{les m et S pour une crossing value} t=-\frac{\frak{s}}{2}+\frac{N}{4q}(2m+1). \end{equation}\\  (with $\frak{s}=k+l$). There can be several $(m,\frak{s})$'s verifying (\ref{les m et S pour une crossing value}) for the same crossing value $t$; however \begin{equation}\label{inegalite pour s et m} 1\leq \frak{s}=k+l\leq 2N-3 \end{equation} which implies that there are at most two possible $(m,\frak{s})$'s  (Lemma \ref{nombre de m et S} below). We will see later  that for $t$ and $(m,\frak{s})$ given, a crossing point $(t,y)$ of the braid shadow above $t$ will be given by the data of $\frak{d}=k-l$.

\begin{lem}\label{nombre de m et S} If $t$ is a crossing value, one of the following two cases occurs:
	\begin{itemize}
		\item
	 {\bf 1st case}. There is exactly one ordered pair $(m,\frak{s})$, $1\leq \frak{s}\leq 2N-3$ verifying (\ref{les m et S pour une crossing value}); we denote it $(m(t),\frak{s}(t))$ and $\frak{s}(t)$ is either $N-2$, $N-1$ or $N$. 
	 \item {\bf 2nd case}. There exist exactly two  $(m,\frak{s})$'s satisfying (\ref{les m et S pour une crossing value}) with $1\leq \frak{s}\leq 2N-3$; we  denote them $(m(t),{\frak s}(t))$ and $(m(t)+q,{\frak s}(t)+N)$.
\end{itemize}
\end{lem}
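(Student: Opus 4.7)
The plan is to treat relation (\ref{les m et S pour une crossing value}) as a linear constraint on the unknown pair $(m,\mathfrak{s})$ for $t$ fixed, and exploit $\gcd(N,q)=1$ to compare any two solutions. First, if $(m,\mathfrak{s})$ and $(m',\mathfrak{s}')$ both satisfy (\ref{les m et S pour une crossing value}) for the same crossing value $t$, subtracting the two relations and clearing denominators yields
\begin{equation*}
q(\mathfrak{s}'-\mathfrak{s}) = N(m'-m).
\end{equation*}
Since $\gcd(N,q)=1$, this forces $N\mid(\mathfrak{s}'-\mathfrak{s})$, and then the equation pins down $m'-m = q\cdot\frac{\mathfrak{s}'-\mathfrak{s}}{N}$.

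Next, I would plug in the range constraint (\ref{inegalite pour s et m}), namely $1\leq \mathfrak{s},\mathfrak{s}'\leq 2N-3$. This gives $|\mathfrak{s}'-\mathfrak{s}|\leq 2N-4<2N$, so combined with divisibility by $N$ we must have $|\mathfrak{s}'-\mathfrak{s}|\in\{0,N\}$. Consequently, at most two distinct admissible pairs $(m,\mathfrak{s})$ can correspond to the same crossing value, and when two do occur, they differ precisely by $(q,N)$.

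Finally, I would characterize exactly which $\mathfrak{s}$'s fail to admit a partner $\mathfrak{s}\pm N$ still lying in $[1,2N-3]$: the value $\mathfrak{s}+N$ is admissible iff $\mathfrak{s}\leq N-3$, while $\mathfrak{s}-N$ is admissible iff $\mathfrak{s}\geq N+1$. So no partner exists precisely when $\mathfrak{s}\in\{N-2,N-1,N\}$, which is the 1st case of the lemma. In the 2nd case, the smaller of the two pairs automatically satisfies $1\leq\mathfrak{s}\leq N-3$; labelling it $(m(t),\mathfrak{s}(t))$, the other pair is then $(m(t)+q,\mathfrak{s}(t)+N)$, as stated. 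The argument is essentially an integer-arithmetic range check, so there is no real obstacle; the only bookkeeping worth being careful about is confirming that the two admissible ranges for the partner $\mathfrak{s}\pm N$ partition the complement of $\{N-2,N-1,N\}$ inside $[1,2N-3]$.
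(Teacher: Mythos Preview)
Your proof is correct and follows essentially the same approach as the paper: subtract two instances of (\ref{les m et S pour une crossing value}), use $\gcd(N,q)=1$ to force solutions to differ by integer multiples of $(q,N)$, and then invoke the range constraint $1\le\mathfrak{s}\le 2N-3$ to bound the number of solutions and pin down the exceptional values $\{N-2,N-1,N\}$. The only cosmetic difference is that the paper fixes $\mathfrak{s}(t)$ to be the minimal $\mathfrak{s}$ at the outset and argues only about $\mathfrak{s}(t)+N$, whereas you treat $\mathfrak{s}\pm N$ symmetrically before singling out the smaller one; the content is the same.
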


\begin{proof} We let $(m(t), {\mathfrak s}(t))$ be the ordered pair such that ${\frak s}(t)$ is the smallest $\mathfrak{s}$ for the $(m,{\mathfrak s})$'s verifying  (\ref{les m et S pour une crossing value}) and (\ref{inegalite pour s et m}). If $(m_1,{\frak s}_1)$ and $(m_2,{\frak s}_2)$ both verify (\ref{les m et S pour une crossing value}) for the same $t$, we have \begin{equation} q({\frak s}_1-{\frak s}_2)=N(m_2-m_1) \end{equation} Since $q$ and $N$ are mutually prime, it follows that, for some integer $a$, \begin{equation}\label{difference entre les S et les m} \frak{s}_2=\frak{s}_1+aN\ \ \ \ m_2=m_1+aq \end{equation} 
	Since $\mathfrak{s}(t)$ is the smallest one, it verifies
	\begin{equation}
	\label{plus petit que N} \mathfrak{s}(t)\leq N.
	\end{equation}
	If we are in the 1st case of the Lemma \ref{nombre de m et S}, i.e. a single $(m, \frak{s})$, we derive from (\ref{difference entre les S et les m}) that ${\frak s}(t)+N$ does not verify (\ref{inegalite pour s et m}), i.e. \begin{equation}\label{S en dehors de l'intervalle} {\frak s}(t)+N>2N-3 \end{equation} Putting together (\ref{plus petit que N}) and (\ref{S en dehors de l'intervalle}), we get $$N-2\leq {\frak s}(t)\leq N$$ which concludes the proof of the 1st case.\\ The 2nd case is clear: since ${\frak s}(t)+2N>2N-3$, ${\frak s}(t)+N$ is the only other integer $\frak{s}$ in $[1, 2N-3]$ which can appear in (\ref{les m et S pour une crossing value}); the corresponding $m$ is $m(t)+q$. \end{proof} 
Lemma \ref{nombre de m et S} told us which $\mathfrak{s}$'s and $m$ occur for crossing points $(t,y)$ above a crossing value $t$: we now find the $k,l$'s such that $\mathfrak{s}=k+l$ and derive the $\sigma_i^{\pm}$'s corresponding to the $(t,y)$'s.
\begin{defn} Let $P=(t,y)$ be a crossing point; we denote by $i(P)\in\{1,...,N-1\}$  the corresponding generator subscript, i.e. $P$ is represented by $\sigma_{i(P)}$ or $\sigma_{i(P)}^{-1}$. \end{defn}
\begin{lem}\label{les valeurs de y} Let $t$ be a crossing value of $B_{N,q,p}$. \begin{enumerate} \item The point $P=(t,y)$ is a crossing point of the braid shadow if and only if \begin{equation}\label{valeur de y} y=(-1)^{m(t)}\cos(\frac{q\frak{d}}{N}\pi) \end{equation}
	 where $\frak{d}$ is any integer in $[1,...,N-1]$ of the same parity as $\frak{s}(t)$
	 \item
	 To determine $i(P)$, we  do the Euclidean division of $q\frak{d}$ by $2N$ \begin{equation}\label{Euclidean division} q\frak{d}=2Nn+w \end{equation} with $n\geq 0$, $-N<w<N$.
	 	\begin{enumerate}
	 		\item
	If $m(t)$ is even, 
	\begin{equation}
	i(P)=i\Big(t,\cos(\frac{q\frak{d}}{N}\pi)\Big)=|w|
	\end{equation}
		\item
		If $m(t)$ is odd, 
		\begin{equation}
		i(P)=i\Big(t,-\cos(\frac{q\frak{d}}{N}\pi)\Big)=N-|w|
		\end{equation}
		\end{enumerate}

		  \end{enumerate} 
		  \end{lem}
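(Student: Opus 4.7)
The plan is to prove both assertions by direct substitution, followed by a counting argument on equally-spaced points of the unit circle.

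For Part (1), I substitute (\ref{equ:valeur de t}) into $\psi_k^{(1)}(t) = \sin\big(\frac{2\pi q}{N}(t+k)\big)$. Writing $\mathfrak{d} = k-l$, the substitution yields $\frac{2\pi q}{N}(t+k) = \frac{\pi q\mathfrak{d}}{N} + \frac{(2m+1)\pi}{2}$, and applying the identity $\sin(A + \pi/2 + m\pi) = (-1)^m\cos A$ gives exactly (\ref{valeur de y}). The parity constraint $\mathfrak{d} \equiv \mathfrak{s}(t) \pmod 2$ comes from $\mathfrak{d} = \mathfrak{s} - 2l$, and the range $\mathfrak{d} \in \{1,\ldots,N-1\}$ from $0 \leq l < k \leq N-1$ (choosing $k > l$, which leaves the geometric crossing unchanged). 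Conversely, for any admissible $\mathfrak{d}$ one recovers $k = (\mathfrak{s}+\mathfrak{d})/2$ and $l=(\mathfrak{s}-\mathfrak{d})/2$.

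For Part (2), I first reduce $y_0$ using the Euclidean division $q\mathfrak{d} = 2Nn + w$, $|w| < N$: by periodicity and evenness of cosine, $\cos(\pi q\mathfrak{d}/N) = \cos(\pi|w|/N)$, and the identity $-\cos\theta = \cos(\pi-\theta)$ yields
\begin{equation*}
y_0 \;=\; \cos(\pi j/N),\qquad j = \begin{cases} |w|, & m \text{ even,} \\ N - |w|, & m \text{ odd.} \end{cases}
\end{equation*}
It then suffices to show that a crossing at height $y_0 = \cos(\pi j/N)$ has index $j$. At time $t$ the strand $y$-coordinates are $\sin(\omega(t+r))$ for $r=0,\ldots,N-1$, where $\omega = 2\pi q/N$. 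Since $\gcd(q,N)=1$, the angles $\omega(t+r) \bmod 2\pi$ form $N$ equally spaced points of spacing $2\pi/N$ on the unit circle. The two solutions of $\sin\theta = y_0$ in $[0,2\pi)$ are $\pi/2 \pm \pi j/N$, which are therefore the angles of the two crossing strands. The arc through $\pi/2$ joining them has length $2\pi j/N$ and lies entirely in the region $\{\sin\theta > y_0\}$; it contains $j+1$ equally spaced points including the two endpoints, whence $j-1$ of the remaining strands lie strictly above $y_0$. With the convention that strand positions are numbered from top to bottom, this yields $i(P) = 1 + (j-1) = j$, handling both cases of (2) at once.

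The main obstacle is the arc-counting step, which requires correctly identifying the two crossing angles among the $N$ equally spaced ones and treating the cases $j \leq N/2$ and $j > N/2$ uniformly by always using the arc through $\pi/2$. A quick check against the explicit $N=3, q=4$ examples of \S\ref{section sur les exemples} provides a useful sanity test for the orientation and labelling conventions.
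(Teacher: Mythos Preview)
Your Part 2 is correct and in fact cleaner than the paper's: rather than appealing somewhat tersely to the monotonicity of cosine on $(0,\pi)$, you explain why $i(P)=j$ by counting equally-spaced angles on the arc through $\pi/2$. This is a genuine alternative and arguably more transparent, since it makes explicit why the non-crossing strands fit into the picture.

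Part 1, however, has a real gap. Your substitution tacitly assumes that the integer $m$ appearing in (\ref{equ:valeur de t}) equals $m(t)$, but Lemma \ref{nombre de m et S} shows that for many crossing values there are \emph{two} pairs $(m,\mathfrak{s})$ compatible with $t$, namely $(m(t),\mathfrak{s}(t))$ and $(m(t)+q,\mathfrak{s}(t)+N)$. When the crossing between the $k$-th and $l$-th strands has $k+l=\mathfrak{s}(t)+N$, the substitution gives
\[
y=(-1)^{m(t)+q}\cos\Big(\frac{\pi q(k-l)}{N}\Big),
\]
and since $q$ is odd this is $-(-1)^{m(t)}\cos(\pi q(k-l)/N)$, not (\ref{valeur de y}). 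The paper recovers the stated form by setting $\tilde{\mathfrak{d}}=N-(k-l)$, which absorbs the extra sign and simultaneously restores the parity of $\mathfrak{s}(t)$.

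The same omission breaks your converse. When $\mathfrak{s}(t)$ is small (say $\mathfrak{s}(t)<N-2$, which is generic in the 2nd case of Lemma \ref{nombre de m et S}), your recipe $k=(\mathfrak{s}(t)+\mathfrak{d})/2$, $l=(\mathfrak{s}(t)-\mathfrak{d})/2$ produces $l<0$ whenever $\mathfrak{d}>\mathfrak{s}(t)$, so those values of $\mathfrak{d}$ are unaccounted for. In the paper's proof these missing $\mathfrak{d}$'s arise from the second pair $(m(t)+q,\mathfrak{s}(t)+N)$ via the substitution $\tilde{\mathfrak{d}}=N-(k-l)$, and the two ranges (\ref{premiere inegalite sur frakd}) and (\ref{intervalle pour d tilda}) together exhaust $[1,N-1]$ with the correct parity. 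To close the gap you need to treat the two cases of Lemma \ref{nombre de m et S} separately, as the paper does.
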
 
		  \begin{proof} {\bf Proof of 1}: we treat separately the two cases of Lemma \ref{nombre de m et S}. \begin{enumerate} \item

{\bf 1st case}: a single ordered pair $(m,\frak{s})$. \\ Let $k,l$ such that $k+l=\frak{s}(t)$ and assume that $l<k$; note that $\frak{d}=k-l$ has the parity of $\frak{s}(t)$. \begin{enumerate} \item If ${\frak s}(t)=N-1$, the smallest possible value for $l$ is $0$ and $k-l$ runs through all integers $\frak{d}$, $1\leq \frak{d}\leq N-1$ with the parity of ${\frak s}(t)=N-1$. \item If ${\mathfrak s}(t)=N-2$ (resp. ${\mathfrak s}(t)=N$), the smallest value for $l$ is $0$ (resp. $1$) and $\mathfrak{d}$ runs through the integers in $[1, N-2]$ with the parity of $N-2$ or $N$; since $N-1$ has parity opposite to $N$ and $N-2$, we can actually assume  $\frak{d}$ in $[1, N-1]$. \end{enumerate} To derive (\ref{valeur de y}), we plug (\ref{les m et S pour une crossing value}) into \begin{equation}\label{l'expression de y} y=\sin\big(\frac{2\pi}{N}q(t+k)\big) \end{equation} \item {\bf 2nd case}: two ordered pairs: $(m(t),\frak{s}(t))$ and $(m(t)+q,\frak{s}(t)+N)$. \begin{enumerate} \item We first consider the $k,l$'s such that $l<k$ and $k+l={\mathfrak s}(t)$. As above,  $k-l$ runs through the integers $\frak{d}$  with the parity of $ {\mathfrak s}(t)$ and such that \begin{equation}\label{premiere inegalite sur frakd} 1\leq \frak{d}\leq {\mathfrak s}(t) \end{equation} \item 
	If $k+l={\mathfrak s}(t)+N$, we look at $ {\mathfrak d}=k-l$'s with $l<k$: $$l= {\mathfrak s}(t)+N-k\geq {\mathfrak s}(t)+N-(N-1)={\mathfrak s}(t)+1$$ $$\mbox{hence\  }{\mathfrak d}={\mathfrak s}(t)+N-2l\leq {\mathfrak s}(t)+N-2{\mathfrak s}(t)-2=N-{\mathfrak s}(t)-2{\mbox{\ and}}$$ 
	\begin{equation}
	\label{intervalle possible pour delta}
	 1\leq \frak{d}\leq N-{\mathfrak s}(t)-2 \end{equation}
	 Moreover every integer in  $\frak{d}\in [1,  N-{\mathfrak s}(t)-2]$
	 with the parity of ${\mathfrak s}(t)+N$ is a legitimate $\frak{d}$, i.e. there exist $k,l$ in $\{1,...,N-1\}$ with   $\frak{d}=k-l$ and $k+l={\mathfrak s}(t)+N$; for example
	$N-{\mathfrak s}(t)-2=(N-1)-({\mathfrak s}(t)+1)$. \\
	\\
	 Using (\ref{l'expression de y}), we derive the $y$-coordinate of the crossing point: \begin{equation}\label{formule pour y pour m plus q; deux} y=(-1)^{m(t)}(-1)^q\cos(\pi\frac{q\frak{d}}{N}) =(-1)^{m(t)}\cos(\pi\frac{q\tilde{\frak{d}}}{N}) \end{equation} where $\tilde{\frak{d}}=N-\frak{d}$; if $\frak{d}$ verifies (\ref{intervalle possible pour delta}), then \begin{equation} \label{intervalle pour d tilda} {\mathfrak s}(t)+2\leq \tilde{\frak{d}}\leq N-1 \end{equation} 
	 Since $\frak{d}$ has the parity of ${\mathfrak s}(t)+N$,  $\tilde{\frak{d}}$ has the parity of ${\mathfrak s}(t)$.

\end{enumerate} Putting together the intervals (\ref{premiere inegalite sur frakd}) and (\ref{intervalle pour d tilda}) concludes the proof of the 2nd case. \end{enumerate}

{\bf Proof of 2}. We derive from (\ref{Euclidean division}) that $$\cos(\pi\frac{q\frak{d}}{N})=\cos(\pi\frac{w}{N}) =\cos(\pi\frac{|w|}{N})$$ so 2 (a) of Lemma \ref{les valeurs de y}  follows from the fact that the function ${\it cos}$ is decreasing on $(0,\pi)$: $$\cos \frac{\pi}{N}>\cos \frac{2\pi}{N}>...>\cos \frac{(N-1)\pi}{N}.$$
 
 and 2 (b) of Lemma \ref{les valeurs de y} follows from $$-\cos(\pi\frac{q\frak{d}}{N})=-\cos(\pi\frac{w}{N})= -\cos(\pi\frac{|w|}{N})=\cos(\pi\frac{N-|w|}{N}).$$
 \end{proof}
 To see how many crossing points $(t,y)$ occur above $t$, i.e. how many values (\ref{valeur de y}) takes for a given $t$, we notice the following. \begin{itemize}
 	\item
 	If $u,v\in\{1,...,N-1\}$ and $\cos(\pi\frac{qu}{N})=\cos(\pi\frac{qv}{N})$, then $u=v$.  \item If $u,v\in\{1,...,N-1\}$ and $\cos (\pi\frac{qu}{N})=-\cos (\pi\frac{qv}{N})$, then $u=N-v$.
 \end{itemize} So the sets $\{\cos(\pi\frac{qu}{N})\slash 1\leq u\leq N-1\}$ and $\{-\cos(\pi\frac{qu}{N})\slash 1\leq u\leq N-1\}$ are identical. We derive from Lemma  \ref{les valeurs de y}
 \begin{cor}\label{differentes parites des generateurs de tresses} Let $t$ be a crossing value. The indices $i(P)$'s for the crossing points $P$'s above $t$ are all the $i$'s in $\{1,...,N-1\}$ and with  
 	
 	\begin{enumerate} \item the parity of ${\mathfrak d}$ and $\mathfrak{s}(t)$ if $m(t)$ is even \item the parity of ${\mathfrak d}+N$ and $\mathfrak{s}(t)+N$ if $m(t)$ is odd. \end{enumerate} \end{cor}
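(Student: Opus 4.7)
The plan is to merge the two cases of Lemma \ref{nombre de m et S} into a single uniform description, then combine the formulas of Lemma \ref{les valeurs de y} with the bijection induced by $u\mapsto\cos(\pi qu/N)$ to identify the set $\{i(P)\}$ above a crossing value $t$.

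First I would introduce a single effective parameter. In the first case of Lemma \ref{nombre de m et S} the crossing points above $t$ are already indexed by $\mathfrak{d}\in\{1,\ldots,N-1\}$ of parity $\mathfrak{s}(t)$, so there is nothing to do. In the second case, the pair $(m(t),\mathfrak{s}(t))$ contributes $\mathfrak{d}\in[1,\mathfrak{s}(t)]$ of parity $\mathfrak{s}(t)$, and the pair $(m(t)+q,\mathfrak{s}(t)+N)$ contributes $\tilde{\mathfrak{d}}\in[\mathfrak{s}(t)+2,N-1]$, also of parity $\mathfrak{s}(t)$ (since $q$ is odd, so $N-\mathfrak{d}$ and $\mathfrak{s}(t)+N-\mathfrak{d}$ have the same parity as $\mathfrak{s}(t)$). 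The union of these two ranges exhausts the parity-$\mathfrak{s}(t)$ subset of $\{1,\ldots,N-1\}$, since the only omitted integer $\mathfrak{s}(t)+1$ has opposite parity. Moreover, by formula (\ref{formule pour y pour m plus q; deux}) the $y$-coordinate is written uniformly as $y=(-1)^{m(t)}\cos(\pi q\mathfrak{d}/N)$, where $\mathfrak{d}$ now denotes the effective parameter.

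Next I would use the Euclidean division $q\mathfrak{d}=2Nn+w$ with $-N<w<N$ from (\ref{Euclidean division}). Since $q$ is odd, $w$ and $\mathfrak{d}$ have the same parity, hence $|w|$ has parity $\mathfrak{s}(t)$; and since $\gcd(q,N)=1$ and $1\leq\mathfrak{d}\leq N-1$, we have $w\neq 0$, so $|w|\in\{1,\ldots,N-1\}$. The map $\mathfrak{d}\mapsto|w(\mathfrak{d})|$ is injective on $\{1,\ldots,N-1\}$: if $|w_1|=|w_2|$ then $\cos(\pi q\mathfrak{d}_1/N)=\cos(\pi q\mathfrak{d}_2/N)$, which by the first bullet preceding the corollary forces $\mathfrak{d}_1=\mathfrak{d}_2$. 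An injection between finite sets of equal size is a bijection, and it preserves parity by the observation above, so it restricts to a bijection on each parity class.

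The conclusion is then immediate from Lemma \ref{les valeurs de y}: when $m(t)$ is even $i(P)=|w|$, which ranges exactly over the integers of parity $\mathfrak{s}(t)$ in $\{1,\ldots,N-1\}$; when $m(t)$ is odd $i(P)=N-|w|$, which ranges over the integers of parity $N+\mathfrak{s}(t)$ in the same set. The only delicate step is the bookkeeping needed to merge the two sub-cases of Lemma \ref{nombre de m et S} into a single parameter covering the full parity class; once this is done, the rest is a clean consequence of $q$ being odd and of the injectivity observation already isolated just before the corollary.
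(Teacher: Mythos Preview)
Your argument is correct and follows essentially the same route as the paper: the paper simply records the injectivity of $\mathfrak{d}\mapsto\cos(\pi q\mathfrak{d}/N)$ (and the identity between the two cosine sets) and then states that the corollary follows from Lemma~\ref{les valeurs de y}. Your proof spells out the two ingredients the paper leaves implicit---that your first paragraph is already the content of Lemma~\ref{les valeurs de y}(1), and that $q$ odd forces $|w|\equiv\mathfrak{d}\pmod 2$ so the bijection $\mathfrak{d}\mapsto|w|$ respects parity classes---but the underlying strategy is the same.
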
 
 \subsubsection{The sign of the crossing points}
 We now compute the sign of the crossing points described in Lemma \ref{les valeurs de y}.
 \begin{lem}

   The sign of the crossing point corresponding to the $(t,y)$ appearing in (\ref{valeur de y}) is given by \begin{equation}\label{signe de y en fonction de u}
  \Sigma(t,y)= (-1)^{m(t)}\sigma\big(p\frac{m(t)}{q}+\frac{p}{2q}+ \frac{2p\phi}{N}\big)\sigma(\frac{q\frak{d}}{N})\sigma(\frac{p\frak{d}}{N}) \end{equation} 
  where $\sigma(r)$ is the parity of the integer part:
  \begin{equation}\sigma(r)=(-1)^{[r]}
  \end{equation}
 
\end{lem}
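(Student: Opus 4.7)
\medskip

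\noindent\textbf{Proof plan.} The plan is to derive (\ref{signe de y en fonction de u}) directly from formula (\ref{formula pour le signe}) which, from [S-V], already expresses $\Sigma(P)$ in terms of $m$ and $k-l$, by substituting the $(m,\mathfrak{s},\mathfrak{d})$-data produced by Lemmas \ref{nombre de m et S} and \ref{les valeurs de y}. Since $\sigma(r)=(-1)^{[r]}$, the identity to prove becomes
\begin{equation*}
(-1)^m(-1)^{\big[p\frac{m}{q}+\frac{p}{2q}+\frac{2p\phi}{N}\big]}(-1)^{[p\frac{k-l}{N}]}(-1)^{[q\frac{k-l}{N}]}
= (-1)^{m(t)}(-1)^{\big[p\frac{m(t)}{q}+\frac{p}{2q}+\frac{2p\phi}{N}\big]}(-1)^{[\frac{q\mathfrak{d}}{N}]}(-1)^{[\frac{p\mathfrak{d}}{N}]},
\end{equation*}
so everything reduces to a bookkeeping check of how $m$ and $k-l$ relate to $m(t)$ and $\mathfrak{d}$ in the two cases of Lemma \ref{nombre de m et S}.

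\noindent\textbf{First case, and the $(m(t),\mathfrak{s}(t))$ sub-case of the second.} Here the crossing point comes from strands $k>l$ with $k+l=\mathfrak{s}(t)$, so $m=m(t)$ and $k-l=\mathfrak{d}$; the formula (\ref{formula pour le signe}) then produces (\ref{signe de y en fonction de u}) with no further manipulation. This is the easy part.

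\noindent\textbf{The $(m(t)+q,\mathfrak{s}(t)+N)$ sub-case.} Here $m=m(t)+q$, and, as in the derivation of (\ref{formule pour y pour m plus q; deux}), the actual strand difference is $k-l=N-\mathfrak{d}$, where $\mathfrak{d}\in[\mathfrak{s}(t)+2,N-1]$ is the standardized index used in Lemma \ref{les valeurs de y}. I would now massage each of the four factors in (\ref{formula pour le signe}):
\begin{itemize}
\item $(-1)^{m(t)+q} = (-1)^{m(t)}(-1)^{q}$;
\item $p\,\tfrac{m(t)+q}{q}=p\,\tfrac{m(t)}{q}+p$, so $(-1)^{[p(m(t)+q)/q+\frac{p}{2q}+\frac{2p\phi}{N}]} = (-1)^{p}(-1)^{[pm(t)/q+\frac{p}{2q}+\frac{2p\phi}{N}]}$;
\item using $(N,p)=(N,q)=1$ and $1\le \mathfrak{d}\le N-1$, the numbers $p\mathfrak{d}/N$ and $q\mathfrak{d}/N$ are never integers, so the elementary identity $[n-x]=n-1-[x]$ for $n\in\mathbb{Z}$ and $x\notin\mathbb{Z}$ gives
\[
\Bigl[p\tfrac{N-\mathfrak{d}}{N}\Bigr]=p-1-\Bigl[\tfrac{p\mathfrak{d}}{N}\Bigr],\qquad
\Bigl[q\tfrac{N-\mathfrak{d}}{N}\Bigr]=q-1-\Bigl[\tfrac{q\mathfrak{d}}{N}\Bigr],
\]
so the two corresponding factors contribute $(-1)^{p-1}(-1)^{q-1}(-1)^{[p\mathfrak{d}/N]}(-1)^{[q\mathfrak{d}/N]}$.
\end{itemize}
Multiplying the four contributions produces the extra factor $(-1)^{q+p+(p-1)+(q-1)} = (-1)^{2p+2q-2}=1$, and what remains is exactly the right-hand side of (\ref{signe de y en fonction de u}).

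\noindent\textbf{Main obstacle.} The only nontrivial step is the second sub-case: one must recognize that the "reflected" strand difference $N-\mathfrak{d}$ and the shifted index $m(t)+q$ enter the old sign formula with enough parity cancellations (the $(-1)^{2p+2q-2}$ above) to yield the \emph{same} expression in terms of $(m(t),\mathfrak{d})$ as in the first sub-case. Once this cancellation is spotted, a single formula covers both cases of Lemma \ref{nombre de m et S} and (\ref{signe de y en fonction de u}) follows.
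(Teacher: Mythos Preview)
Your proof is correct and follows essentially the same route as the paper: split according to the two cases of Lemma~\ref{nombre de m et S}, observe the first case is immediate from (\ref{formula pour le signe}), and in the second case substitute $m=m(t)+q$, $k-l=N-\mathfrak{d}$ and check that the resulting parity shifts cancel. You have simply made the cancellation more explicit (via $[n-x]=n-1-[x]$ and the factor $(-1)^{2p+2q-2}=1$) than the paper, which writes the before/after expressions and leaves the verification to the reader.
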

 \begin{proof} We recall (Lemma \ref{nombre de m et S}) that 
 	$k+l=\frak{s}(t)$ or $k+l=\frak{s}(t)+N$.\\ If $k+l=\frak{s}(t)$ and $\frak{d}=k-l$, then (\ref{signe de y en fonction de u}) is just the formula (\ref{formula pour le signe}) for the sign of a crossing point. So we assume that  $k+l={\mathfrak s}(t)+N$: we have seen above that  $y$ is given by (\ref{formule pour y pour m plus q; deux}) with  $k-l=N-\tilde{\frak{d}}$ and we write (\ref{formula pour le signe}) for the sign of the crossing point $$(-1)^{m(t)+q}\sigma\left(q(\frac{N-\tilde{\frak{d}}}{N})\right)\sigma\left(p(\frac{N-\tilde{\frak{d}}}{N})\right)\sigma\Big(p\frac{m(t)+q}{q}+\frac{p}{2q}+ \frac{2p\phi}{N}\Big)$$ $$=(-1)^{m(t)}\sigma(q\frac{\tilde{\frak{d}}}{N})\sigma(p\frac{\tilde{\frak{d}}}{N})\sigma\Big(p\frac{m(t)}{q}+\frac{p}{2q}+ \frac{2p\phi}{N}\Big)$$ \end{proof}
 
 We write the last two factors of (\ref{signe de y en fonction de u}) in terms of $i(P)$:
  \begin{lem}\label{les deux derniers facteurs du signe} We let $P=\big(t,(-1)^{m(t)}\cos\pi\frac{q\frak{d}}{N}\big)$ be a crossing point of $B_{N,q,p}$ with $m(t)$,  $\frak{d}$ and  $w$ as in 2) of Lemma \ref{les valeurs de y}. Then 
  	\begin{equation} \label{la ou epsilon va peut-etre disparaitre} \sigma(\frac{q\frak{d}}{N})\sigma(\frac{p\frak{d}}{N})=
  	\left\{ \begin{array}{c}
  	\sigma\big(\frac{Bpi(P)}{N}\big)\mbox{\ \ if\ }p\mbox{\ is odd}\\ 
  	(-1)^{m(t)}\sigma\big(\frac{Bpi(P)}{N}\big)\mbox{\ \ if\ }p\mbox{\ is even}\end{array} \right. 
  	\end{equation} 
  	
  \end{lem}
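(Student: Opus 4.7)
The plan is to identify the left-hand side modulo the Bezout relation $2NA+Bq=1$, reduce to the single quantity $w$ from the Euclidean division (\ref{Euclidean division}), and then do a four-way case split on the sign of $w$ and the parity of $m(t)$.

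First I would observe that the relation $2NA+Bq=1$ gives $Bq\equiv 1\pmod{2N}$, and in particular $\gcd(B,N)=1$ and, because $q$ is odd, $B$ is odd. Multiplying the Euclidean division $q\mathfrak{d}=2Nn+w$ by $B$ yields $\mathfrak{d}\equiv Bw\pmod{2N}$, hence $p\mathfrak{d}\equiv Bpw\pmod{2N}$. Dividing by $N$, the difference $p\mathfrak{d}/N-Bpw/N$ is an even integer, and since $\gcd(Bp,N)=1$ together with $1\le i(P)\le N-1$ guarantees that $Bpw/N$ and $Bpi(P)/N$ are never integers, we get
\[
\sigma\!\Big(\tfrac{p\mathfrak{d}}{N}\Big)=\sigma\!\Big(\tfrac{Bpw}{N}\Big).
\]
On the other hand, $q\mathfrak{d}/N=2n+w/N$ with $0<|w|<N$, so $[q\mathfrak{d}/N]$ is even if $w>0$ and odd if $w<0$, giving $\sigma(q\mathfrak{d}/N)=\mathrm{sign}(w)$. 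Thus the left-hand side of (\ref{la ou epsilon va peut-etre disparaitre}) equals $\mathrm{sign}(w)\,\sigma(Bpw/N)$.

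Next I would use two elementary properties of $\sigma$: for non-integer $x$, $\sigma(-x)=-\sigma(x)$ (since $[-x]=-[x]-1$), and for every integer $k$, $\sigma(x-k)=(-1)^k\sigma(x)$. Translating $w$ in terms of $i(P)$ according to Lemma \ref{les valeurs de y}, I split into four cases:
\begin{itemize}
\item $m(t)$ even, $w>0$: $w=i(P)$, so $\mathrm{sign}(w)\sigma(Bpw/N)=\sigma(Bpi(P)/N)$.
\item $m(t)$ even, $w<0$: $w=-i(P)$, so $\mathrm{sign}(w)\sigma(Bpw/N)=(-1)\cdot(-\sigma(Bpi(P)/N))=\sigma(Bpi(P)/N)$.
\item $m(t)$ odd, $w>0$: $w=N-i(P)$, so $\sigma(Bpw/N)=\sigma(Bp-Bpi(P)/N)=(-1)^{Bp+1}\sigma(Bpi(P)/N)$, and $\mathrm{sign}(w)=+1$.
\item $m(t)$ odd, $w<0$: $w=i(P)-N$, so $\sigma(Bpw/N)=(-1)^{Bp}\sigma(Bpi(P)/N)$ and $\mathrm{sign}(w)=-1$, giving again $(-1)^{Bp+1}\sigma(Bpi(P)/N)$.
\end{itemize}
So the product equals $\sigma(Bpi(P)/N)$ when $m(t)$ is even, and $(-1)^{Bp+1}\sigma(Bpi(P)/N)$ when $m(t)$ is odd.

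To finish, I use that $B$ is odd: if $p$ is odd then $Bp$ is odd and $(-1)^{Bp+1}=1$, so the product is $\sigma(Bpi(P)/N)$ in both parities of $m(t)$; if $p$ is even then $Bp$ is even and $(-1)^{Bp+1}=-1$, which is exactly $(-1)^{m(t)}$ in the $m(t)$-odd branch (and trivially equal to $(-1)^{m(t)}=1$ in the $m(t)$-even branch). This gives the two formulas of (\ref{la ou epsilon va peut-etre disparaitre}).

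The only real difficulty is bookkeeping: keeping straight the four subcases and making sure the asymmetry between $p$ odd and $p$ even emerges cleanly from the single sign $(-1)^{Bp+1}$. Conceptually the argument is just the Bezout identity mod $2N$ combined with the two floor-function identities above.
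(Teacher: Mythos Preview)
Your argument is correct and follows essentially the same route as the paper: reduce both factors to expressions in $w$ via the Bezout relation $2NA+Bq=1$ and the Euclidean division, then translate $w$ into $i(P)$ using Lemma \ref{les valeurs de y}. The only cosmetic difference is that the paper collapses your four-way split into two cases by first observing $\sigma(w/N)\,\sigma(pBw/N)=\sigma(|w|/N)\,\sigma(pB|w|/N)=\sigma(pB|w|/N)$ (the two sign flips for negative $w$ cancel, and $0<|w|<N$ makes the first factor $+1$), and then splitting only on the parity of $m(t)$.
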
 
  \begin{proof}
  	It follows from (\ref{Euclidean division}) that 
  	\begin{equation}
  	\label{premiere identite avec sigma}
  	\sigma(\frac{q\frak{d}}{N})=\sigma(\frac{w}{N}).
  	\end{equation}
  	We recall (\ref{definition des A et B}), namely $2NA+Bq=1$, hence $\frak{d}=2NA\frak{d}+Bq\frak{d}$; putting this together with the Euclidean division in (\ref{Euclidean division}), we have
  	\begin{equation}\label{une equation avec frak de d} 
  	\frak{d}=2NA\frak{d}+2NnB+Bw 
  	\end{equation} 
  	Thus
  	\begin{equation}
  	\label{deuxieme identite avec sigma}
  	\sigma(\frac{p\frak{d}}{N})=\sigma(\frac{pBw}{N}).
  	\end{equation} Now
  	\begin{equation}\label{extremement intermediaire} 
  	\sigma(\frac{w}{N})\sigma(\frac{pBw}{N})=
  	\sigma(\frac{|w|}{N})\sigma(\frac{pB|w|}{N})=\sigma(\frac{pB|w|}{N})
  	\end{equation} 
  	\begin{itemize} \item If $m(t)$ is even, then $i(P)=|w|$ and the Lemma is proved. \item If $m(t)$ is odd, we use the fact that $B$ is odd to write $$\sigma(\frac{pB|w|}{N})=\sigma\big(\frac{pB(N-i(P))|w|}{N}\big)
  		=
  		\left\{ \begin{array}{c}
  		\sigma(\frac{pBi(P)}{N})\mbox{\ \ if\ }p\mbox{\ is odd}\\ 
  		-\sigma(\frac{pBi(P)}{N})\mbox{\ \ if\ }p\mbox{\ is even}\end{array} \right.  $$ 
  	\end{itemize}
  \end{proof}
  It follows from Lemma \ref{les deux derniers facteurs du signe} that  the sign $\Sigma(P)$ given in (\ref{signe de y en fonction de u}) of a crossing point $P$ of crossing value $t$ is ($\epsilon$ has been defined in (\ref{definition d'epsilon avec le signe de Conway}) above)
  \begin{itemize}
  	\item  $\sigma\Big(p\frac{m(t)}{q}+\frac{p}{2q}+ \frac{2p\phi}{N}\Big)\epsilon(N,q,p)(i)$ if $p$ is even
  	\item $\sigma\Big(p\frac{m(t)}{q}+\frac{p}{2q}+ \frac{2p\phi}{N}\Big)(-1)^{m(t)}\epsilon(N,q,p)(i)$ if $p$ is odd. 
  \end{itemize} 
  We recall (see the formulae (\ref{deuxieme apparition de alpha et beta}) in the Main Theorem) that the $\pm 1$-exponent of a $\sigma_i$ in $\alpha_{N,q,p}$ or $\beta_{N,q,p}$ is $\epsilon(N,q,p)(i)$, hence
  \begin{lem}\label{exposant de alpha et beta} 
  	The $\pm 1$-exponent of the $\alpha_{N,q,p}$ or $\beta_{N,q,p}$ corresponding to a crossing value $t$ in $B_{N,q,p}$ is
  	
  	\begin{enumerate} \item  $\sigma\Big(p\frac{m(t)}{q}+\frac{p}{2q}+ \frac{2p\phi}{N}\Big)$ if $p$ is even
  		\item   $(-1)^{m(k)}\sigma\Big(p\frac{m(t)}{q}+\frac{p}{2q}+ \frac{2p\phi}{N}\Big)$ if $p$ is odd.
  	\end{enumerate}
  \end{lem}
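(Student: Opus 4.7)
The plan is to read this lemma as an essentially algebraic consequence of combining the sign formula (\ref{signe de y en fonction de u}) with Lemma \ref{les deux derniers facteurs du signe}, then invoking Corollary \ref{differentes parites des generateurs de tresses} to package the result into a single $\alpha^{\pm 1}$ or $\beta^{\pm 1}$ block. Concretely, I would first substitute the conclusion of Lemma \ref{les deux derniers facteurs du signe} for the product $\sigma(q\mathfrak{d}/N)\sigma(p\mathfrak{d}/N)$ appearing in (\ref{signe de y en fonction de u}) and then split on the parity of $p$.

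When $p$ is odd, the substitution produces $\epsilon_{N,q,p}(i(P))$ directly, so
\[
\Sigma(P)=(-1)^{m(t)}\,\sigma\!\left(\tfrac{pm(t)}{q}+\tfrac{p}{2q}+\tfrac{2p\phi}{N}\right)\,\epsilon_{N,q,p}(i(P)).
\]
When $p$ is even, the substitution yields an extra factor $(-1)^{m(t)}$ which cancels the one already present in (\ref{signe de y en fonction de u}), giving
\[
\Sigma(P)=\sigma\!\left(\tfrac{pm(t)}{q}+\tfrac{p}{2q}+\tfrac{2p\phi}{N}\right)\,\epsilon_{N,q,p}(i(P)).
\]
The decisive observation is that in both cases $\Sigma(P)$ factors as $X(t)\cdot \epsilon_{N,q,p}(i(P))$, where the sign $X(t)$ depends only on $t$ and on the parity of $p$, and \emph{not} on which crossing point $P$ above $t$ is being examined.

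Next I would invoke Corollary \ref{differentes parites des generateurs de tresses}: as $P$ ranges over the crossings above a fixed crossing value $t$, the index $i(P)$ runs through all integers of one fixed parity in $\{1,\dots,N-1\}$. Because two braid generators $\sigma_i,\sigma_j$ of the same parity satisfy $|i-j|\geq 2$ and therefore commute, the order in which the crossings at height $t$ are multiplied is irrelevant, and the block corresponding to $t$ equals
\[
\prod_{i\text{ of the relevant parity}} \sigma_i^{X(t)\,\epsilon_{N,q,p}(i)} \;=\; \alpha_{N,q,p}^{X(t)} \ \text{ or }\ \beta_{N,q,p}^{X(t)},
\]
according to whether that parity is even or odd. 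Reading off $X(t)$ from the two displays above yields exactly the two claimed formulae.

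The only substantive content sits in the factorization $\Sigma(P)=X(t)\,\epsilon_{N,q,p}(i(P))$, which is already the point of Lemma \ref{les deux derniers facteurs du signe}; by design, that lemma absorbs all $P$-dependence of the sign into the very $\epsilon$-factor used to define $\alpha_{N,q,p}$ and $\beta_{N,q,p}$. Consequently I do not expect a real obstacle here: the task is bookkeeping, and the mild subtlety is simply to remember to treat the two parities of $p$ separately so that the extraneous $(-1)^{m(t)}$ in the even case is accounted for.
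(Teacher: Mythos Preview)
Your proposal is correct and follows essentially the same route as the paper: the paper substitutes Lemma~\ref{les deux derniers facteurs du signe} into (\ref{signe de y en fonction de u}), splits on the parity of $p$, and then recalls that the exponent of $\sigma_i$ in $\alpha_{N,q,p}$ or $\beta_{N,q,p}$ is $\epsilon_{N,q,p}(i)$, exactly as you do. Your explicit appeal to Corollary~\ref{differentes parites des generateurs de tresses} and to the commutativity of same-parity generators makes the packaging step more transparent than the paper's treatment, but the argument is the same.
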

 The formulae in Lemma \ref{exposant de alpha et beta} depend on $m(t)$, where $t$ goes through the $2q$ crossing values. If $t$ is the $h$-th crossing value, for $h=1,...,2q$, we want to have $m(t)$ directly as an expression in $h$ so we
 number the crossing values \begin{equation}\label{liste des t} t_1<t_2<...<t_q<...<t_{2q} \end{equation} and for any integer $k$, with $1\leq k\leq 2q$, we let \begin{equation}\label{m et s pour un des t} m(k)=m(t_k)\ \ \ \ \ \ \ \frak{s}(k)=\frak{s}(t_k) \end{equation}
If $t_k$ and $t_{k+1}$ are two consecutive crossing values, we derive from (\ref{les m et S pour une crossing value}) \begin{equation}\label{crossing values qui se suivent} t_{k+1}-t_k=\frac{1}{2q}\Big[q\big(\frak{s}(k)-\frak{s}(k+1)\big)+N\big(m(k+1)-m(k)\big)\Big]\geq \frac{1}{2q} 
\end{equation} 
Since there are $2q$ crossing values in $[\eta,1+\eta]$, (\ref{crossing values qui se suivent}) is an equality and we have 
\begin{equation}\label{relation entre les S et m pour les valeurs qui se suivent} q\big(\frak{s}(k)-\frak{s}(k+1)\big)+N\big(m(k+1)-m(k)\big)= 1 \end{equation} We note in passing that, if we plug (\ref{relation entre les S et m pour les valeurs qui se suivent}) into Proposition \ref{differentes parites des generateurs de tresses}, we get the confirmation of the obvious fact
\begin{lem} The crossing points above $t_k$ and $t_{k+1}$ are represented by $\sigma_i^{\pm 1}$'s with $i$'s of opposite parities. 
\end{lem}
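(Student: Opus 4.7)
The plan is straightforward: combine Corollary \ref{differentes parites des generateurs de tresses} with the identity (\ref{relation entre les S et m pour les valeurs qui se suivent}) and reduce modulo $2$, exploiting the fact that $q$ is odd (Assumption \ref{assumption on the integers}).

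First, I would introduce a single notation for the common parity of the subscripts $i(P)$ of the crossing points above a fixed $t_k$. Writing $\pi(k) \in \mathbb{Z}/2\mathbb{Z}$ for this parity, the two cases of Corollary \ref{differentes parites des generateurs de tresses} can be unified into
\begin{equation}
\pi(k) \equiv \mathfrak{s}(k) + N\, m(k) \pmod{2},
\end{equation}
since the term $N\, m(k)$ vanishes mod $2$ when $m(k)$ is even and contributes an extra $N$ when $m(k)$ is odd, matching the two cases of the corollary exactly.

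Next I would compare this expression for $k$ and $k+1$. Subtracting gives
\begin{equation}
\pi(k) - \pi(k+1) \equiv \bigl(\mathfrak{s}(k) - \mathfrak{s}(k+1)\bigr) + N\bigl(m(k) - m(k+1)\bigr) \pmod{2}.
\end{equation}
Now I would invoke the exact identity (\ref{relation entre les S et m pour les valeurs qui se suivent}),
\begin{equation}
q\bigl(\mathfrak{s}(k) - \mathfrak{s}(k+1)\bigr) + N\bigl(m(k+1) - m(k)\bigr) = 1,
\end{equation}
and reduce it mod $2$. Since $q$ is odd, the $q$ coefficient disappears, and since $-1 \equiv 1 \pmod 2$ the sign in front of the $N$ term is irrelevant. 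This yields
\begin{equation}
\bigl(\mathfrak{s}(k) - \mathfrak{s}(k+1)\bigr) + N\bigl(m(k) - m(k+1)\bigr) \equiv 1 \pmod{2},
\end{equation}
hence $\pi(k) \not\equiv \pi(k+1) \pmod 2$, which is exactly the claim.

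There is no real obstacle here: the two ingredients are already in place, and the only step requiring any thought is packaging the two cases of the corollary into the single congruence for $\pi(k)$, so that the odd/even dichotomy on $m(k)$ need not be carried through the comparison. The essential arithmetic input is that $q$ is odd, which is precisely the standing Assumption \ref{assumption on the integers}.
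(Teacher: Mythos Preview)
Your proof is correct and is exactly the approach the paper indicates: the paper simply remarks that plugging identity (\ref{relation entre les S et m pour les valeurs qui se suivent}) into Corollary \ref{differentes parites des generateurs de tresses} gives the lemma, and you have written out the details of that computation, including the nice packaging $\pi(k)\equiv \mathfrak{s}(k)+N\,m(k)\pmod 2$ and the use of $q$ odd.
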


 We now confront (\ref{relation entre les S et m pour les valeurs qui se suivent}) with $2NA+Bq=1$ and derive the existence of an integer $\nu_k$ such that $$m(k+1)-m(k)=\nu_k q+ 2A.$$ Thus, for any $k$, there exists an integer $a_k$ such that \begin{equation}\label{une expression pour mk} m(k)=m(1)+a_kq+2(k-1)A \end{equation}  \begin{equation}\label{moins un puissance m} (-1)^{m(k)}=(-1)^{m(1)}(-1)^{a_k} \end{equation}

Define $\phi_0$ by \begin{equation}\label{definition de la phase qui tue} p\frac{m(1)}{q}-\frac{2Ap}{q}+\frac{p}{2q}+\frac{2p\phi_0}{N}=0 \end{equation}
We see (\ref{formula pour le signe}) that $\phi_0$ is a critical phase, i.e. a phase for which the knot $K(N,p,q,\phi)$ is singular. So we pick a phase
	\begin{equation}
	\label{definition de la phase phi}
	\phi=\phi_0+\xi
	\end{equation}
	 where $\xi$ is a very small positive number. Using (\ref{une expression pour mk}), we rewrite \begin{equation}\label{expression pour s avec xi} \sigma\Big(p\frac{m(t)}{q}+\frac{p}{2q}+ \frac{2p\phi}{N}\Big)=(-1)^{a_kp}(-1)^{\big[\frac{2Ap}{q}k+\xi\big]} \end{equation} It follows from Lemma \ref{exposant de alpha et beta} and equations (\ref{une expression pour mk}), (\ref{expression pour s avec xi}) that the exponent of the $\alpha_{N,q,p}$ or $\beta_{N,q,p}$ at the $k$-th crossing value is 
$$
\left\{ \begin{array}{c}
(-1)^{m(1)}(-1)^{\big[\frac{2Ap}{q}k+\xi\big]}\mbox{\ \ if\ }p\mbox{\ is odd}\\
\\ 
(-1)^{\big[\frac{2Ap}{q}k+\xi\big]}\mbox{\ \ if\ }p\mbox{\ is even} \end{array} \right.
$$
 Since we are working up to mirror transformation, we assume  $$(-1)^{m(1)}=1.$$ 
 We  now conclude: the expression $[\frac{2Ap}{q}k+\xi]$ is equal to $1$ for $k=q,2q$ and equal to $\lambda(k)$ for the other $k$'s. Going back to the statement of the Main Theorem, this gives us the exponent for the $k$-th crossing values with $k\leq q$ or $k=2q$. We settle the case of the $k$'s with $q<k<2q$ by noticing that
 $$\lambda(2q-k)=-\lambda(k).$$ \qed
 \section{The four-genus}\label{section sur le four genus} 
\subsection{Ribbon knots}\label{preliminaires sur le ribbon}
A {\it ribbon knot}  in $\mathbb{S}^3$ bounds a disk in $\mathbb{S}^3$ with only ribbon singularities; equivalently it bounds an embedded disk in $\mathbb{B}^4$ with does not have local maxima for the distance to the origin of $\mathbb{B}^4$. Thus a ribbon knot is slice, i.e. its $4$-genus is zero; the long-standing Slice-Ribbon conjecture asks if the converse is true. 
\subsubsection{Proof of Theorem \ref{theoreme sur le ribbon}}
Th. \ref{theoreme sur le ribbon} has been stated by Lamm and also follows from his more general construction of ribbon symmetric unions ([La 2], [K-T]). His proof is fairly allusive so we felt it would be useful to give a more detailed proof.\\
We recall the well-known fact:
\begin{prop}\label{noeud symetrique est ribbon} Let $K$ be a knot in $\mathbb{R}^3$ which is symmetric with respect to a plane $P$ in $\mathbb{R}^3$; then it is ribbon. 
\end{prop}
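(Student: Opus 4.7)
The plan is to reduce to the normal form in which $K$ meets $P$ transversely in exactly two points, and then to write down an explicit immersed disk bounded by $K$ whose self-intersections are ribbon singularities. After a small isotopy of $K$ commuting with the reflection $r$ through $P$, assume $K\pitchfork P$. The restriction $r|_K$ is an involution of $K\simeq S^1$, and the classification of circle involutions says its fixed set has $0$, $2$, or infinitely many points. If all of $K$ is fixed, then $K\subset P$ is a planar unknot and trivially ribbon; if the fixed set is empty, then by connectedness $K$ lies in one open half-space bounded by $P$, but $r(K)=K$ forces $K$ to lie in the opposite half-space, a contradiction. Hence I may assume $K\cap P=\{p_1,p_2\}$, so $K^+:=K\cap H^+$ is a single embedded arc $\alpha$ with endpoints $p_1,p_2$, and $K=\alpha\cup r(\alpha)$.

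Next I build the ribbon disk explicitly. Choose coordinates with $P=\{z=0\}$ and $H^+=\{z\geq 0\}$, and write $\alpha(s)=(\pi(\alpha(s)),h(s))$ for $s\in[0,1]$, where $\pi:\mathbb{R}^3\to P$ is orthogonal projection and $h\geq 0$ with $h(0)=h(1)=0$. Define the immersed surface
\[
D=\{(\pi(\alpha(s)),\,t\,h(s))\,:\,s\in[0,1],\ t\in[-1,1]\}.
\]
Its parameter space is the rectangle $[0,1]\times[-1,1]$ with the two vertical edges collapsed to the points $p_1$ and $p_2$, which is a topological disk, and $\partial D=\alpha\cup r(\alpha)=K$.

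It remains to verify the ribbon property. After a further small $r$-equivariant perturbation I may assume $\pi(\alpha)$ is a generic planar immersion with only transverse double points, and that at each double point the heights $h(s_1),h(s_2)$ are distinct, say $h(s_1)<h(s_2)$. Above such a double point the two slab segments overlap in an arc whose two preimages in the parameter rectangle are the full vertical segment $\{s_1\}\times[-1,1]$, whose endpoints lie on $\partial D$, and the shorter segment $\{s_2\}\times[-h(s_1)/h(s_2),h(s_1)/h(s_2)]$, whose endpoints lie in the interior of the rectangle. This is exactly the local model of a ribbon singularity, so $D$ is an immersed ribbon disk bounded by $K$.

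The main obstacle is the equivariant general-position argument. The isotopies putting $K$ in the desired form must commute with $r$, and the codimension-one bad phenomena (triple points of $\pi(\alpha)$, self-tangencies, crossings at equal heights) must be removed by small symmetric perturbations. These are routine equivariant transversality statements, but they are the only step that requires care.
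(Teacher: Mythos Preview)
Your proof is correct and follows essentially the same approach as the paper: both construct the ribbon disk as the union of the chords $[r(X),X]$ for $X\in K^+$ (your parametrization $(\pi(\alpha(s)),t\,h(s))$ with $t\in[-1,1]$ is exactly the paper's $\Phi(t,X)=tX+(1-t)r(X)$ after the change $t\mapsto (t+1)/2$), and both observe that self-intersections occur only when two such chords lie on the same line perpendicular to $P$, forcing one chord to contain the other. Your treatment is in fact a bit more careful than the paper's---in particular your involution argument for $|K\cap P|=2$ and your explicit identification of which preimage arc hits $\partial D$---and the extra genericity condition $h(s_1)\neq h(s_2)$ you impose is actually automatic from the embeddedness of $\alpha$.
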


\begin{proof}
	We endow $\mathbb{R}^3$ with the frame $Oxyz$ and assume that $P$ is defined by the equation $x=0$. By genericity arguments, we assume
	\begin{enumerate}
		\item
		$K$ meets $P$ at a finite number of points
		\item 
		outside of $P$, $K$ is never tangent to the direction of $Ox$. 	
	\end{enumerate}
	Since $K$ has one component, it meets $P$ at exactly two points.\\ 
	We let $K_+$ (resp. $K_-$) be the intersection of $K$ with the half-space of $\mathbb{R}^3$ defined by $z\geq 0$ (resp. $z\leq 0$): $K_+$ and $K_-$ are both diffeomorphic to a closed interval.\\
	Letting $S$ be the symmetry in $\mathbb{R}^3$ with respect to $P$, we let
	$$\Phi=[0,1]\times K_+\longrightarrow\mathbb{R}^3$$
	\begin{equation}
	(t,X)\mapsto tX+(1-t)S(X)
	\end{equation}
	The self-intersections of $\Phi$ are given by the data of  $t_1,t_2, X_1, X_2$ such that
	\begin{equation}\label{egalite}
	t_1X_1+(1-t_1)S(X_1)=t_2X_2+(1-t_2)S(X_2)
	\end{equation} 
	We denote by $(x_i,y_i,z_i)$, $i=1,2$ the coordinates of $X_i$. Since $S(x_i,y_i,z_i)=(-x_i,y_i,z_i)$, (\ref{egalite}) implies that 
	$$y_1=y_2, z_1=z_2.$$
	Thus the line segments $I_1=X_1S(X_1)$ and $I_2=X_2S(X_2)$ are both included in the line which is defined by the equations $y=y_1,z=z_1$. Moreover, one of them is included in the other one and we have a ribbon singularity.

\end{proof}
The Main Theorem tells us that, if $p$ and $q$ are mutually prime, with $q$ odd, $B_{N,q,p}$ is as in Fig. \ref{premier dessin}.\\
\begin{figure} [h]
\adjustbox{width = 4in, height = 2in}{ 
	\begin{tikzpicture}
	\draw  (6,0) arc [radius=4, start angle=90, end angle=270];
	\node at (3,-4) {$Q$};
	\draw  (6,-2) arc [radius=2, start angle=90, end angle=270];
	\draw (6,0)--(6,-2);
	\draw (6,-6)--(6,-8); 
	\draw  (8,-8) arc [radius=4, start angle=-90, end angle=90];
	\node at (3,-4) {$Q$};
	\draw  (8,-6) arc [radius=2, start angle=-90, end angle=90];
	\draw (8,0)--(8,-2);
	\draw (8,-6)--(8,-8);
	\node at (11,-4) {$Q^{-1}$};
	\draw (6,0)--(8,-0.5);
	\draw (6,-0.5)--(6.8,-0.3); \draw (7.2,-0.2)--(8,0);
	\draw (6,-1)--(8,-1.5);
	\draw (6,-1.5)--(6.8,-1.3); \draw (7.2,-1.2)--(8,-1);
	\draw (6,-1.5)--(6.8,-1.3); \draw (7.2,-1.2)--(8,-1);
	\draw (6,-7)--(8,-6.5);
	\draw (6,-7.5)--(8,-8);
	\draw (6,-6.5)--(6.8,-6.7); \draw (7.2,-6.8)--(8,-7);
	\draw (6,-8)--(6.8,-7.8); \draw (7.2,-7.7)--(8,-7.5);
	\end{tikzpicture} \hskip 1 in 
	\begin{tikzpicture}
	\draw  (6,0) arc [radius=4, start angle=90, end angle=270];
	\node at (3,-4) {$Q$};
	\draw  (6,-2) arc [radius=2, start angle=90, end angle=270];
	\draw (6,0)--(6,-2);
	\draw (6,-6)--(6,-8); 
	\draw  (8,-8) arc [radius=4, start angle=-90, end angle=90];
	\node at (3,-4) {$Q$};
	\draw  (8,-6) arc [radius=2, start angle=-90, end angle=90];
	\draw (8,0)--(8,-2);
	\draw (8,-6)--(8,-8);
	\node at (11,-4) {$Q^{-1}$};
	\draw (6,0)--(8,0);
	\draw (6,-0.5)--(8,-0.5);
	\draw (6,-1)--(8,-1);
	\draw (6,-1.5)--(8,-1.5);
	\draw (6,-6.5)--(8,-6.5);
	\draw (6,-7)--(8,-7);
	\draw (6,-7.5)--(8,-7.5);
	\draw (6,-8)--(8,-8);
	
	\end{tikzpicture}

	}
	\caption{$B(5,q,p)$ and  the  link $L$ }\label{premier dessin}\label{deuxieme dessin}
\end{figure}
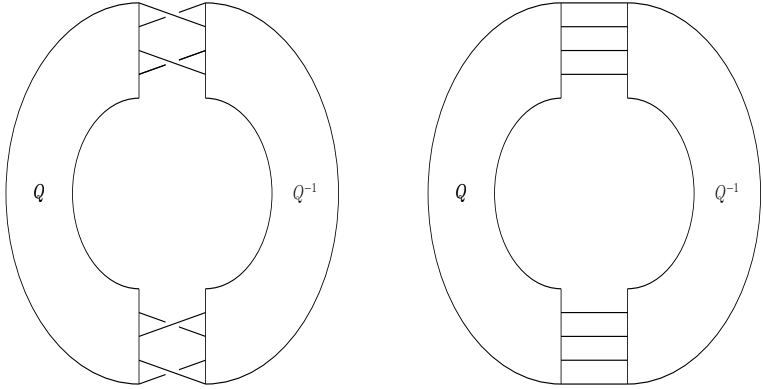
If $N$ is the number of strands, there are $N-1$ half-twist tangles connecting $Q$ and $Q^{-1}$; we replace them by $N-1$ tangles and get the $N$-component link $L$ of Fig. \ref{deuxieme dessin} which is symmetric w.r.t. a plane.
 
Proposition \ref{noeud symetrique est ribbon} tells us that $L$ bounds $N$ ribbon disks $D_1$, ..., $D_N$; and the same arguments show us that two of these disks only have ribbon-type intersection.\\
We now connect each $D_i$ to $D_{i+1}$ by a half-twisted band bounded by the half-twist tangle of Fig. \ref{premier dessin}. The resulting surface is a topological disk with only ribbon singularities. \qed

\subsection{General case: proof of Theorem \ref{theoreme sur le four-genus}}
We use an idea by Brandenbursky and Kedra ([B-K]). If $b$ is a $N$-braid, we denote by $\hat{b}$ the link obtained by closing the braid $b$. If $b_1$ and $b_2$ are $N$-two braids, [B-K] constructed a cobordism of Euler characteristic $-N$ between 
the closure of the product $\widehat{b_1b_2}$ and the disjoint union of the closures $\widehat{b_1}\sqcup\widehat{b_2}$.\\
Letting $\tilde{q}=\frac{q}{N}$, $\tilde{p}=\frac{p}{N}$, we recall that 
$$B_{N,q,p}=B_{N,\tilde{q},\tilde{p}}^d.$$
Applying [B-K]'s result $d$ times, we derive a cobordism in $\mathbb{B}^4$ of Euler characteristic $-N(d-1)$ between  $\widehat{B_{N,q,p}}$ and $\underbrace{B_{N,\tilde{q},\tilde{p}}\sqcup B_{N,\tilde{q},\tilde{p}}\sqcup...B_{N,\tilde{q},\tilde{p}}}_{d\ \mbox{copies}}$. Since $\widehat{B_{N,\tilde{q},\tilde{p}}}$ is a ribbon knot (Theorem \ref{theoreme sur le ribbon}), it bounds an embedded disk in $\mathbb{B}^4$. Thus $\widehat{B_{N,q,p}}$ bounds a surface of Euler characteristic
$$-d(N-1)+d=1-(d-1)(N-1).$$
We recover the formula (\ref{inegalite pour le four genus}) for the genus and Theorem \ref{theoreme sur le four-genus} is proved. 
\qed

\subsection{Quasipositive knots: proof of Proposition \ref{four genus; egalite}}
 Lee Rudolph (see [Ru] for details) defines a braid $\gamma\in{\bf B}_N$ to be {\it quasipositive} if it is a product of conjugates $w\sigma_i w^{-1}$ of positive braid generators, i.e. \begin{equation}\label{quasipositive} \gamma=w_1\sigma_{i_1} w_1^{-1}w_2\sigma_{i_2} w_2^{-1}...w_k\sigma_{i_k} w_k^{-1} \end{equation} \begin{thm}\label{rudolph}([Ru]) If $\gamma$ is a closed quasipositive braid written as in (\ref{quasipositive}) closing in a knot $\hat{\gamma}$, its four-genus verifies $$1-2g_4(\hat{\gamma})=n-k.$$ \end{thm}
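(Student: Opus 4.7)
The plan is to establish the two inequalities $1 - 2g_4(\hat\gamma) \geq N - k$ and $1 - 2g_4(\hat\gamma) \leq N - k$ separately (reading the $n$ in the statement as the braid index $N$). The first is constructive and combinatorial; the second is the genuinely deep ingredient and is supplied by the slice-Bennequin inequality.

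For the upper bound on $g_4(\hat\gamma)$, the quasipositive factorization $\gamma = \prod_{j=1}^{k} w_j \sigma_{i_j} w_j^{-1}$ gives a natural smoothly embedded surface $\Sigma \subset \mathbb{B}^4$ with $\partial \Sigma = \hat\gamma$. I would start from $N$ disjoint, parallel flat disks in $\mathbb{B}^4$ whose boundaries form the trivial $N$-braid in $\mathbb{S}^3$, and, for each factor in order, attach a positively half-twisted band whose core realizes the conjugate $w_j \sigma_{i_j} w_j^{-1}$ (its feet lie on the pair of disks indexed by the image of $\{i_j, i_j+1\}$ under the permutation induced by $w_j$). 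The resulting $\Sigma$ has $N$ zero-handles and $k$ one-handles, so $\chi(\Sigma) = N - k$. Since $\hat\gamma$ is a knot, $\Sigma$ has a single boundary component, so $g(\Sigma) = (1 - \chi(\Sigma))/2 = (k - N + 1)/2$, giving $g_4(\hat\gamma) \leq (k - N + 1)/2$, i.e.\ $1 - 2g_4(\hat\gamma) \geq N - k$.

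For the matching lower bound I would invoke the slice-Bennequin inequality for braid closures: if $\beta$ is an $N$-braid with algebraic exponent sum $e(\beta)$ whose closure is a knot, then $g_4(\hat\beta) \geq (e(\beta) - N + 1)/2$. In our setting each conjugate $w_j \sigma_{i_j} w_j^{-1}$ contributes $+1$ to the exponent sum, so $e(\gamma) = k$ and $g_4(\hat\gamma) \geq (k - N + 1)/2$, which rewrites as $1 - 2g_4(\hat\gamma) \leq N - k$. Combined with the upper bound this forces equality.

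The main obstacle is the slice-Bennequin inequality itself: it is far from elementary. Its proof depends on gauge-theoretic input, ultimately on Kronheimer and Mrowka's adjunction inequality for smoothly embedded surfaces in a four-manifold (their proof of the Thom/Milnor conjecture for torus knots), extended by Rudolph to closures of quasipositive braids. Every known combinatorial substitute yields only weaker bounds, so in a self-contained argument this analytic step is unavoidable.
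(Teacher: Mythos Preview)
The paper does not give its own proof of this statement: Theorem~\ref{rudolph} is quoted from Rudolph~[Ru] as an external result and is used as a black box in the proof of Proposition~\ref{four genus; egalite}. So there is nothing in the paper to compare your argument against.

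That said, your outline is the correct one and is exactly Rudolph's original strategy. The upper bound on $g_4$ comes from the quasipositive Seifert surface built out of $N$ disks and $k$ positively half-twisted bands, one per conjugate factor, giving Euler characteristic $N-k$; the lower bound is the slice-Bennequin inequality, whose proof ultimately rests on Kronheimer--Mrowka's gauge-theoretic adjunction inequality. You have also correctly identified that the deep, non-elementary input is entirely concentrated in the second inequality. Your reading of the $n$ in the displayed formula as the number $N$ of strands is the intended one.
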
 It is easy to check that under the assumptions of Proposition \ref{four genus; egalite}, the exponents of all the $\sigma_{2k}$'s and $\sigma_{2k+1}$'s appearing respectively in $\alpha_{N,q,p}$ and $\beta_{N,q,p}$ are all of the same sign. Since we are working up to mirror symmetry, we can assume all these exponents to be equal to $1$; thus \begin{equation}\label{tresses positives} \alpha_{N,q,p}=\prod_{2\leq 2k\leq N-1}\sigma_{2k}\ \ \ \ \ \ \beta_{N,q,p}=\prod_{1\leq 2k+1\leq N-1}\sigma_{2k+1} \end{equation} 
	Hence $B_{N,p,q}=(Q\alpha_{N,q,p}Q^{-1}\beta_{N,q,p})^d$ is a quasipositive braid; Theorem \ref{rudolph} tells us that $1-2g_4(K(N,q,p))=N-d(N-1)$ and Proposition \ref{four genus; egalite} follows.

\section{Trivial knots: proof of Proposition \ref{liste de noeuds triviaux}}\label{la section sur les noeuds triviaux}
\subsection{The knot $K(N,q,q+N)$ is trivial}

We set
\begin{equation}
A=\prod_{1\leq 2k\leq N}\sigma_{2k}\ \ \ B=\prod_{1\leq 2k+1\leq N}\sigma_{2k+1}
\end{equation}

	\begin{lem}\label{forme de la tresse triviale}
		\begin{equation}\label{tresse triviale}
		B_{N,q,q+N}=A(BA)^{\frac{q-1}{2}}
		(B^{-1}A^{-1})^{\frac{q-1}{2}}B^{-1}
		\end{equation}
	\end{lem}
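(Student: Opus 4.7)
The idea is to apply the Main Theorem to $p=q+N$ and simplify everything explicitly.

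Let $a,b$ be integers satisfying $2Na+bq=1$; since $q$ is odd, $b$ must be odd. For $1\leq i\leq N-1$ one has $pbi/N=bi+qbi/N$, and the relation $qb=1-2Na$ yields $qbi/N=i/N-2ai$, so $[qbi/N]=-2ai$ because $0<i/N<1$. Hence
\[ \epsilon_{N,q,p}(i)=(-1)^{bi-2ai}=(-1)^{bi}=(-1)^i \]
using that $b$ is odd. Looking at (\ref{deuxieme apparition de alpha et beta}), every even-indexed generator in $\alpha_{N,q,p}$ gets exponent $+1$ and every odd-indexed generator in $\beta_{N,q,p}$ gets exponent $-1$, i.e.\ $\alpha_{N,q,p}=A$ and $\beta_{N,q,p}=B^{-1}$. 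Next, for $k\in\{1,\ldots,q-1\}$, rewriting $2ap/q=2a+2aN/q=(2a-b)+1/q$ gives $[2apk/q]=(2a-b)k$ (since $0<k/q<1$), and since $2a-b$ is odd one obtains $\lambda_{N,q,p}(k)=(-1)^k$.

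Substituting these values into (\ref{expression comme un commutateur}), the $q-1$ alternating factors of $Q_{N,q,p}$ become $\alpha^{-1}\beta\,\alpha^{-1}\beta\cdots\alpha^{-1}\beta=(A^{-1}B^{-1})^{(q-1)/2}$ (ending with $\beta$ since $q-1$ is even), so $Q_{N,q,p}^{-1}=(BA)^{(q-1)/2}$. The central $\alpha$ contributes $A$ and the final $\beta$ contributes $B^{-1}$, giving
\[ B_{N,q,q+N}\ =\ (A^{-1}B^{-1})^{(q-1)/2}\,A\,(BA)^{(q-1)/2}\,B^{-1} \]
up to mirror. A cyclic conjugation sending the prefix $(A^{-1}B^{-1})^{(q-1)/2}$ to the end -- which is legal since only the conjugacy class of a braid is relevant to its closure, and it is the closed braid that will be analyzed in the remainder of the subsection -- followed by the trivial regrouping
\[ B^{-1}(A^{-1}B^{-1})^{(q-1)/2}=(B^{-1}A^{-1})^{(q-1)/2}B^{-1}, \]
produces the form asserted in the lemma.

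The only mildly delicate step is the floor-function bookkeeping that turns $\epsilon$ and $\lambda$ into the clean expressions $(-1)^i$ and $(-1)^k$; everything after that is mechanical. The main thing to watch is the notational collision between the letters $A,B$ of the lemma (which denote braids) and the $A,B$ of (\ref{definition des A et B}) (integers), which is why I have renamed the latter $a,b$ throughout.
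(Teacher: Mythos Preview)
Your proof is correct and follows the same approach as the paper: apply the Main Theorem, compute $\epsilon(i)=(-1)^i$ and $\lambda(k)=(-1)^k$ directly, identify $\alpha=A$ and $\beta=B^{-1}$, and then rearrange. Your floor-function computations are more detailed than the paper's (which simply asserts the values of $\epsilon$ and $\lambda$), and you are explicit about the cyclic conjugation needed at the end, whereas the paper only records the regrouping identity $(AB)^{(q-1)/2}A=A(BA)^{(q-1)/2}$ and leaves the rest to the reader; this is the same argument, just presented with different levels of detail.

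One small remark: the paper notes that if $q$ happens to be even (so that $N$, and hence $q+N$, is odd) one swaps the roles of $q$ and $q+N$ before invoking the Main Theorem. You implicitly assume $q$ odd from the outset, which is consistent with the exponent $(q-1)/2$ in the statement, so no harm is done.
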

	The main thing to note about this formula is that all the positive generators are on one side and all the negative generators are on the other side.
	\begin{proof}
		We use the Main Theorem. It assumes that $q$ is odd but  in the present case, if $q$  is even, $N$ has to be odd, hence $q+N$ is odd and we switch $q$ and $q+N$ to apply the theorem. We compute
		 \begin{equation}
		\epsilon(i)=(-1)^i\ \ \ \ \lambda(k)=(-1)^k
		\end{equation}
		Thus $\alpha_{N,q,q+N}=\prod_{1\leq 2k\leq N}\sigma_{2k}=A$ and  $\beta_{N,q,q+N}=\prod_{1\leq 2k+1\leq N}\sigma^{-1}_{2k+1}=B^{-1}$. We conclude by noticing that
		$$(AB)^{\frac{q-1}{2}}A=A
		(BA)^{\frac{q-1}{2}}$$
		\end{proof}
	We construct a trivial pure braid $\mathcal{B}_N$; we will show that
	 $B_{N,N+q,q}$ is the product of a power of 
	$\mathcal{B}_N$  and of a piece of $\mathcal{B}_{N}$.
	
			If $N=2k$ is even, we let
			\begin{equation}
			{\mathcal B}_{N}=(BA)^k(B^{-1}A^{-1})^k.
			\end{equation}
			
			If $N=2k+1$ is odd, we let 
			\begin{equation}
			{\mathcal B}_N=(BA)^kBA^{-1}(B^{-1}A^{-1})^k.
			\end{equation}
In both cases, we check that the corresponding permutation between the endpoints of the braid is the identity, thus ${\mathcal B}_N$ is a pure braid.\\
\\
To prove that it is a trivial braid, we discuss when one strand of ${\mathcal B}_N$ is above another one; so let us fix some terminology.\\

	We number the strands of $\mathcal{B}_N$: the $j$-th strand, $0\leq j\leq N-1$ is the strand starting at the $(j+1)$-th point on the left (the points being counted from top to bottom). \\
	We say that the $j$-th strand is above the $k$-th strand if, wherever there is a crossing point between these two strands, the $j$-th strand is above the $k$-th strand. As an exemple, in Fig. \ref{second dessin}, the red strand is above all the other strands. 
	\begin{lem}\label{au dessus de toutes les suivantes}
		\label{brins au dessus}

		If $j,k$ are two integers with $0\leq j<k\leq N-1$, the $j$-th strand of ${\mathcal B}_N$ is above the $k$-th strand. 
		Thus ${\mathcal B}_N$ closes in $N$ unlinked trivial links, i.e. ${\mathcal B}_N=1$.

	\end{lem}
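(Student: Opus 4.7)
My plan is to prove the lemma by tracking each strand individually through the braid word defining $\mathcal B_N$ and verifying that whenever strands $j$ and $k$ with $j<k$ cross, the $j$-th strand lies in front of the $k$-th. Once this "above" relation holds uniformly, a classical isotopy argument finishes the lemma: I can lift strand $0$ vertically to the top of $\mathbb R^2\times[0,1]$ (no crossings change, since it was already above everything), obtaining a straight segment together with a trivial $(N-1)$-braid enjoying the same property; iterating, $\mathcal B_N$ becomes $N$ parallel vertical strands, so $\mathcal B_N=1$ in $\mathbf B_N$ and its closure is the $N$-component unlink.

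\emph{Step 1: position tracking.} Since $A$ and $B$ are each products of pairwise commuting generators, each block of $\mathcal B_N$ acts on the set of positions as a disjoint union of adjacent transpositions, contributing one crossing per generator. I will tabulate the position of every strand after every block. The expected pattern is that for $N=2k$, the positive half $(BA)^k$ reverses the strand order (strand $j$ arrives at position $N+1-j$), and the negative half $(B^{-1}A^{-1})^k$ then runs the same motion in reverse, sending each strand back to its starting position; this, incidentally, re-derives the pure-braid assertion already stated in the excerpt. For $N=2k+1$ the central block $BA^{-1}$ plays the role of the "missing" factor that completes the reversal, and the second half again mirrors the first.

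\emph{Step 2: verification of signs at crossings.} Using the convention that in $\sigma_i$ the strand coming from position $i$ passes in front of the strand coming from position $i+1$ (and the opposite for $\sigma_i^{-1}$), I compare at each crossing the indices of the two strands involved, read off from the tabulation of Step 1. I verify that in every positive crossing in the first half the front strand has lower index than the back strand, and that in every negative crossing in the second half the front strand also has lower index. The mirror symmetry between the two halves provided by Step 1 means that once one half is verified, the other follows automatically.

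The main obstacle is the combinatorial bookkeeping of Step 1: one must track all $N$ strands through every elementary crossing and keep the indexing unambiguous. For $N=2k$ the structure is periodic and once a single $BA$ block is analysed the rest iterates cleanly. The genuine subtlety is the $N=2k+1$ case, where the central $BA^{-1}$ contains a negative sub-block $A^{-1}$ embedded inside an otherwise positive half: one must check directly that the "above" property still holds at these crossings, and that the subsequent $(B^{-1}A^{-1})^k$ undoes the intermediate configuration in the required symmetric way.
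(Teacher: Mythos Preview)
Your plan is sound and would lead to a correct proof, but it follows a different route from the paper's argument. You work \emph{combinatorially}: you read the word $\mathcal B_N$ block by block, maintain the position permutation after each $A^{\pm1}$ or $B^{\pm1}$, and check at every generator that the front strand has the smaller index. Your structural observations are accurate --- for $N=2k$ the half-word $(BA)^k$ does send strand $j$ to the reversed position, and the second half undoes this by mirror symmetry; for $N=2k+1$ the extra $BA^{-1}$ in the middle completes the reversal and must be checked separately, as you note. The paper instead works \emph{geometrically}: it realises $\mathcal B_N$ concretely as $N$ explicit piecewise-linear paths in a rectangle (each strand descends, runs horizontally, ascends, runs horizontally, descends, with specified breakpoints depending on the strand index and the parity of $N$), and then argues directly from the slopes that whenever two paths cross with the ``wrong'' one on top, the indices would be ordered the wrong way. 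Your approach is more algorithmic and closer to the group-theoretic definition of $\mathcal B_N$; the paper's is shorter once the piecewise-linear picture is set up, and handles both parities of $N$ in essentially one stroke without the special treatment of the central $BA^{-1}$ that your scheme requires. Either way, the final deduction that $\mathcal B_N=1$ from the uniform ``above'' property is the same.
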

	The figure \ref{second dessin} illustrates the lemma.
	
	\begin{figure}[h] 
	\adjustbox{height = 1.5in, width = 3 in}{  
		\begin{tikzpicture}  \hskip 2 in 
		
		\draw[color=red] (0,0)--(6,-6)--(8,-6)--(14,0)--(16,0);
		\draw[color=green] (0,-2)--(0.75,-1.25);
		\draw[color=green] (1.25,-0.75)--(2,0)--(4,0)--(8.6,-4.6);
		\draw[color=green] (9.2,-5.2)--(10,-6)--(12,-6)--(16,-2);
		\draw[color=blue] (0,-4)--(2,-6)--(4,-6)--(4.75,-5.25);
		\draw[color=blue] (5.25,-4.75)--(6.74,-3.25);
		\draw[color=blue] (7.25,-2.75)--(10,0)--(12,0)--(12.8,-0.8);
		\draw[color=blue] (13.2,-1.2)--(14.8,-2.7);
		\draw[color=blue] (15.2,-3.1)--(16,-4);
		\draw (0,-6)--(0.75,-5.25);
		\draw (0,-6)--(0.75,-5.25);
		\draw (1.25,-4.75)--(2.75,-3.25);
		\draw (3.25,-2.75)--(4.75,-1.25);
		\draw (5.25,-0.75)--(6,0)--(8,0)--(8.75,-0.75);
		\draw (9.25,-1.25)--(10.75,-2.75);
		\draw (11.25,-3.25)--(12.75,-4.75);
		\draw (13.25,-5.25)--(14,-6)--(16,-6);
		\draw[very thick] (8,1)--(8,-7);
		\end{tikzpicture}
		}
		\caption{$B_{4,4}=\sigma_1\sigma_3\sigma_2\sigma_1\sigma_3\sigma_2\sigma^{-1}_1\sigma^{-1}_3\sigma^{-1}_2\sigma^{-1}_1\sigma^{-1}_3\sigma^{-1}_2$}\label{second dessin}
	\end{figure}
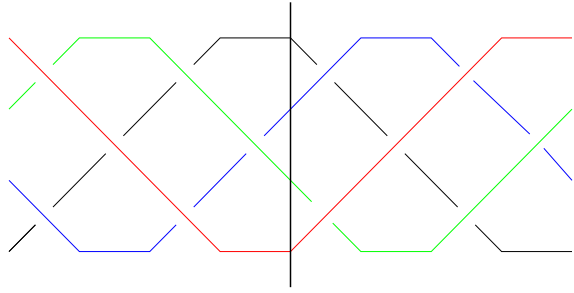
		
	\begin{proof}
		We describe the strands of ${\mathcal B}_N$ in the braid shadow, i.e. their projection to the $xy$.  We endow the plane with a coordinate $Oxy$ such that the $j$-th strand starts at $(0,-j)$ and ends at $(2N,-j)$.  The upper left point has coordinates $(0,0)$ (in Figure \ref{second dessin} it is the starting point of the red strand).\\ We say that a strand is
		{\it ascending}, denoted $\nearrow$ (resp. {\it descending}, denoted $\searrow$) if it has a $+1$ (resp. $-1$) slope. It is {\it horizontal}, denoted $\longrightarrow$, when the slope is $0$. \\
		\\	We describe here the $k$-strands for $k$ odd (the case of an even $k$ is similar): it goes up and down as follows\\
			\begin{enumerate}
				\item
				$\searrow$ from $(0,-k)$ to $(N-1-k, -(N-1))$
				\item
				$\longrightarrow$ from $(N-1-k, -(N-1))$ to $(N-k, -(N-1))$
				\item
				$\nearrow$ from $(N-k,-(N-1))$ to $(2N-1-k, 0)$
				\item
				$\longrightarrow$ from $(2N-1-k, 0)$ to $(2N-k, 0)$
				\item
				$\searrow$  from $(2N-k, 0)$ to $(2N,-k)$ 
			\end{enumerate}

		Assume now that the $k$-th strand is above the $j$-th strand at a crossing point $(x,y)$. Assuming that $j$ is odd (the even case is similar), one of the following two cases occurs
		
		\begin{enumerate}\label{qui est au dessus}
			\item
			$0\leq x\leq N$ and the $k$-th (resp. $j$-th) strand is $\searrow$ (resp. $\nearrow$). Then $S_k$ is as 1. above and $S_j$ is as 3.
			\item
			$N\leq x\leq 2N$ and the $k$-th (resp. $j$-th) strand $\nearrow$ (resp. $\searrow$).  Then $S_k$ is as 3. above and $S_j$ is as 5.
			
		\end{enumerate} 
		In both cases it is easy to check that $k<j$.

	\end{proof}
 
	We conclude the proof of the proposition in the case when $N$ is even; the odd case is similar. We derive from Lemma \ref{au dessus de toutes les suivantes} that for an $n>k$,
	$$A(BA)^n(B^{-1}A^{-1})^nB=A(BA)^{n-k}(BA)^k(B^{-1}A^{-1})^k
	(B^{-1}A^{-1})^{n-k}B$$
	$$=A(BA)^{n-k}(B^{-1}A^{-1})^{n-k}B.$$
	Thus, if $b$ is the remainder of the division of $\frac{q-1}{2}$ by $k$, we have
	\begin{equation}\label{forme finale de la tresse triviale pour N pair}
	B_{N,q,q+N}=A(BA)^b(B^{-1}A^{-1})^bB
	\end{equation}
	The braid (\ref{forme finale de la tresse triviale pour N pair}) is a piece of the braid $(BA)^k(B^{-1}A^{-1})^k$ where the $i$-th strand is above the $j$-th strands, for $j>i$. Thus the same is true for (\ref{forme finale de la tresse triviale pour N pair}) which closes therefore in a trivial knot.

	\subsection{The knot $K(N,1,p)$ is trivial}
		This follows from the Main Theorem. We can also prove it directly by computing the crossing points and their sign: we see that
	 every $\sigma_i^{\pm}$ appears once and only once in the braid $B_{N,1,p}$ and so braid represents a trivial knot.  
	\subsection{The other knots of Proposition \ref{liste de noeuds triviaux}}
	We have now seen two cases where $K(N,q,p)$ is trivial. We know that $K(N,q,p)$ and $K(N,p,q)$ are isotopic; and $K(N,q,k)$ and 
	$K(N,q,2qN+k)$  (resp. $K(N,q,2qN-k)$) are isotopic (resp. mirror image of one another). Thus we can get more examples of trivial knots, e.g. $K(3,5,29)$ and $K(3,5,31)$.
\newpage 
\section{Lists of Jones polynomials}	
 \begin{figure}[!ht]
 	\begin{center} 
		 \includegraphics[scale=.5]{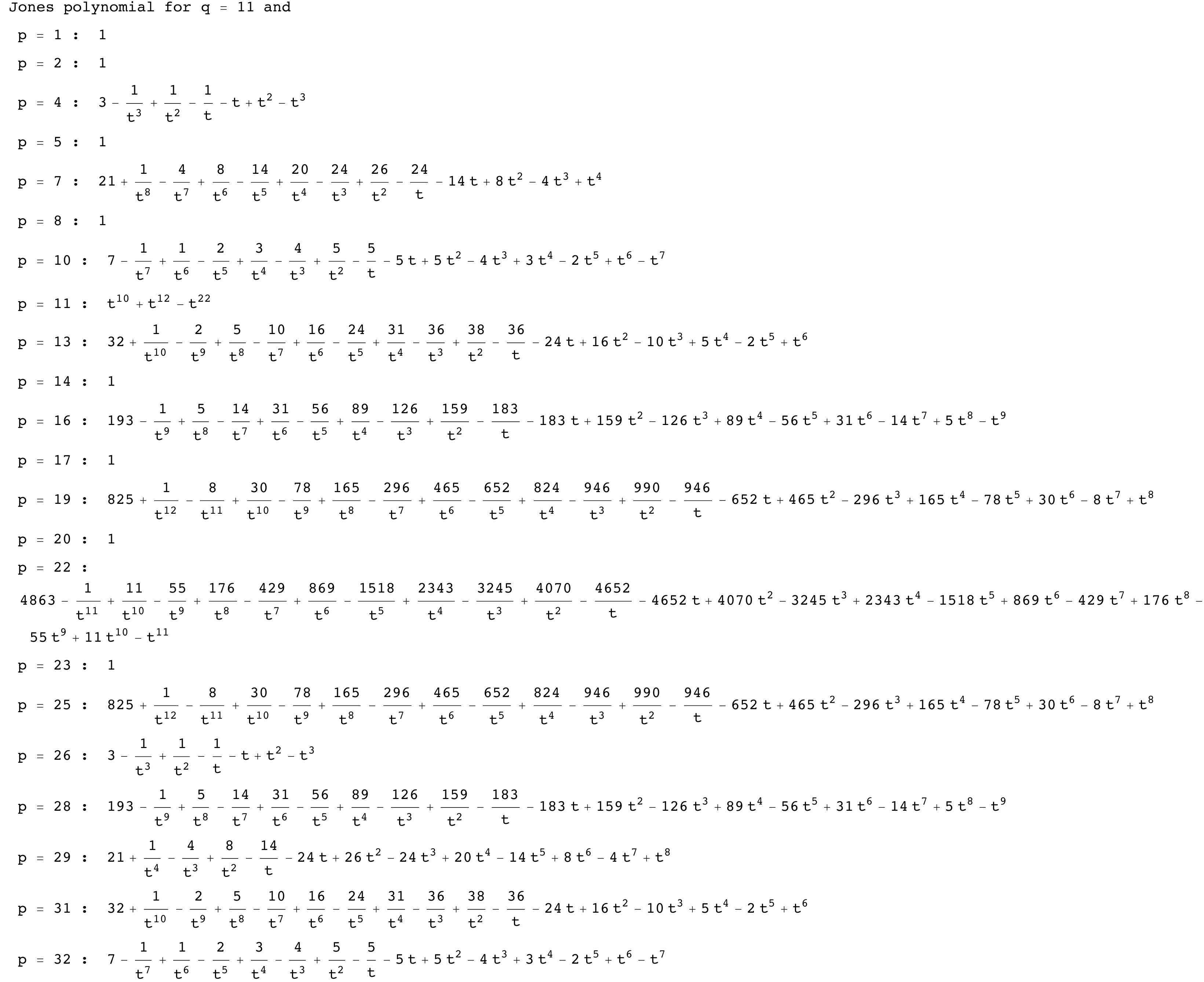} 
 		\caption{List of Jones polynomials of knots $K(3, 11,p)$} 
 	\end{center} \end{figure}
 
  \begin{figure}[!ht]
 	\begin{center} 
		\includegraphics[scale=.38]{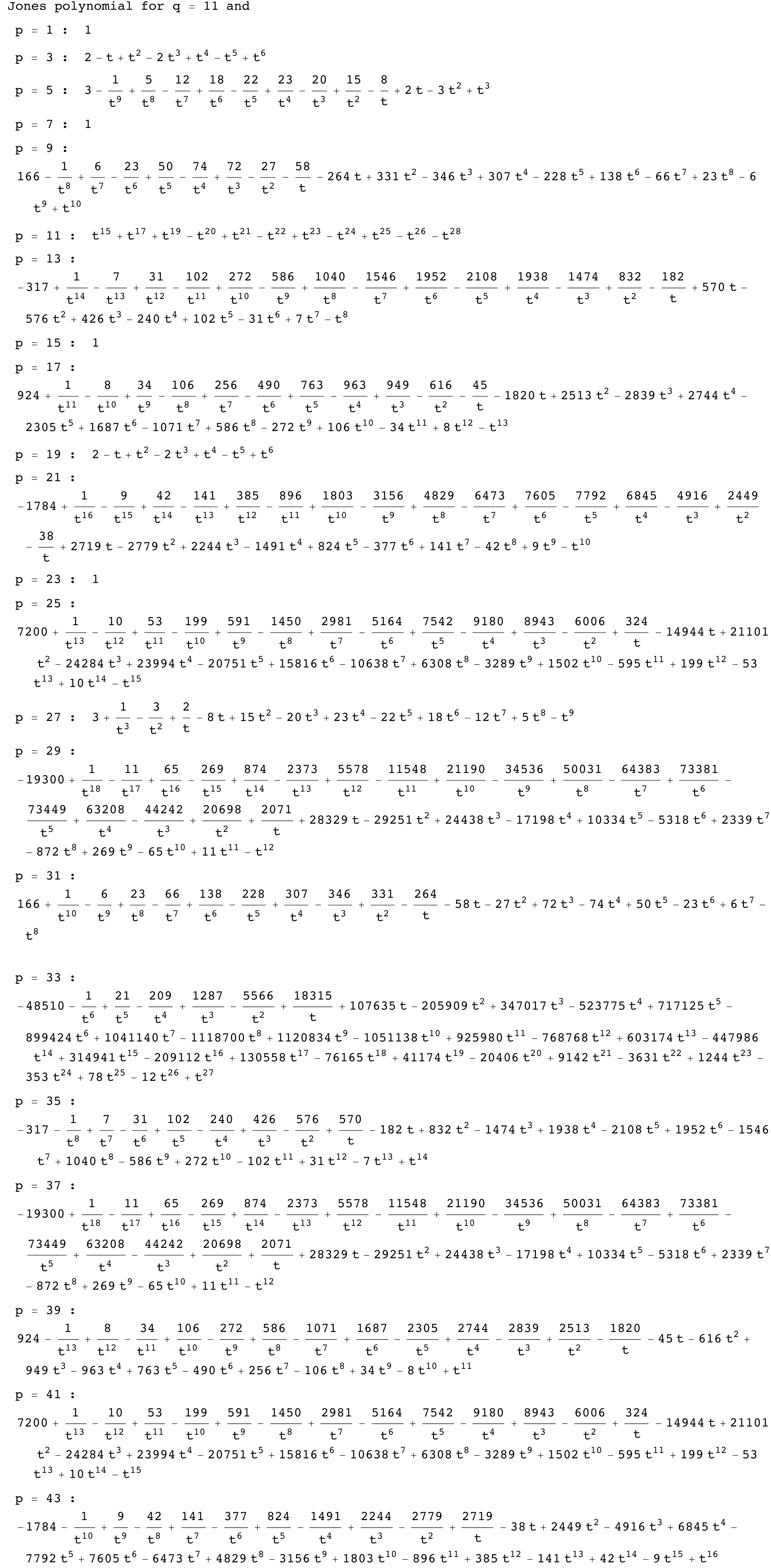} 
 		\caption{List of Jones polynomials of knots $K(4, 11,p)$} 
 	\end{center} \end{figure}
\section{Appendix} 
We give a better proof of the following fact from  [S-V]:
\begin{prop}\label{prop:independance de la phase}Let $\phi_1$ and $\phi_2$ two real numbers. The knots  $K(N,q,p,\phi_1)$ and $K(N,q,p,\phi_2)$ defined in (\ref{definition du disque}) are either isotopic or mirror image of one another.
\end{prop}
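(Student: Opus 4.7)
The plan is to combine three elementary symmetries of the family $\phi\mapsto K_\phi := K(N,q,p,\phi)$ with a continuity argument for the knot type across the finite set of critical phases (those at which the parametrized curve acquires self-intersections). The three symmetries I would use are immediate from the parametrization (\ref{definition du disque}). First (\emph{periodicity}), $K_{\phi+2\pi/p}=K_\phi$ as parametrized curves. Second (\emph{mirror shift}), since $\cos(p(t+\phi+\pi/p))=-\cos(p(t+\phi))$, the curve $K_{\phi+\pi/p}$ is the image of $K_\phi$ under the reflection $(x,y,z)\mapsto(x,y,-z)$, hence the mirror of $K_\phi$. Third (\emph{rotational isotopy}), the rotation of $\mathbb{R}^3$ by $2\pi N/q$ about the $Oz$-axis is an ambient isotopy; reparametrizing by $t\mapsto t+2\pi/q$ one checks that it carries $K_\phi$ onto $K_{\phi-2\pi/q}$, so $K_\phi\sim K_{\phi-2\pi k/q}$ for every integer $k$.

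Next I would analyse the set $\Delta$ of critical phases modulo $2\pi/p$. On each connected chamber of its complement, the family $\phi\mapsto K_\phi$ is a smooth ambient isotopy of embeddings, so the knot type is constant on each chamber. A direct calculation, using $(N,p)=1$, shows that a shadow crossing between strands $k$ and $l$ at crossing value $t$ is critical precisely under a condition depending only on $t$ and $\mathfrak{s}=k+l$. Consequently, by Lemma~\ref{nombre de m et S} and Corollary~\ref{differentes parites des generateurs de tresses}, all the shadow crossings constituting a single $\alpha_{N,q,p}^{\pm}$- or $\beta_{N,q,p}^{\pm}$-block of the braid become critical for the same value~$\phi_0$.

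The crux of the argument is then to show that as $\phi$ crosses a critical phase $\phi_0$, the simultaneous flipping of an entire $\alpha$- or $\beta$-block is realized by an honest ambient isotopy of $\mathbb{R}^3$, not merely by a crossing change. Locally, at $\phi_0$ the curve has finitely many transverse self-intersections, all confined to a thin time-slab of the braid, and a half-turn of this slab exchanges the over- and under-strands at every crossing of the block simultaneously. To extend this half-turn to a global ambient isotopy, one checks that the slab is ``locally unknotted'' in the ambient space --- a property that follows from the explicit coordinates of the crossing points provided by Lemma~\ref{les valeurs de y}, which show that within the slab the participating strands form a standard local tangle disjoint from the remaining arcs.

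Combining the three symmetries, the local constancy of the knot type on each chamber, and the block-flip-as-isotopy step, one concludes that $K_{\phi_1}$ and $K_{\phi_2}$ are isotopic or mirror images of one another for any $\phi_1,\phi_2\in\mathbb{R}$. The main obstacle is the last technical step: verifying that each block-flip is truly an ambient isotopy rather than a non-trivial crossing change. This reduces to a careful analysis of the local geometry of the Lissajous-toric braid near a critical phase, and specifically to establishing the disjointness of the extracted slab from the rest of the knot --- a property that I expect to follow from the structural decomposition given by the Main Theorem, but which needs to be made explicit.
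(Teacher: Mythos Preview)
Your proposal has a genuine gap at precisely the point you flag as the ``main obstacle'', and the gap is not a technicality: the block-flip cannot be realized by a local isotopy. A block such as $\alpha=\prod_{2i}\sigma_{2i}^{\epsilon(2i)}$ and its inverse $\alpha^{-1}$ are distinct elements of the braid group; as tangles in a slab they are not isotopic rel boundary (for instance their pairwise linking contributions differ in sign). The ``half-turn of the slab'' you describe does not fix the entry/exit points of the strands, so it is not an isotopy of tangles and cannot be extended to an ambient isotopy of the closed knot without moving the rest of the braid. Thus the passage across a critical phase genuinely is, locally, a collection of simultaneous crossing changes; the fact that the global knot type is nevertheless preserved is exactly the content of the proposition and cannot be obtained from the local picture you sketch.

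The paper's proof takes a different and much shorter route that avoids crossing any critical phase at all. Using the formula (\ref{formule:difference entre les phases critiques}) for differences of critical phases, it observes that the chambers are permuted transitively by the group of phase-shifts $\phi\mapsto\phi+\frac{N}{2}(\frac{m}{p}+\frac{n}{q})$. The key computation is the substitution $s=t+\frac{Nn}{2q}$ in (\ref{definition de la tresse}), which shows that each such shift is realized by an explicit sign-change $(y,z)\mapsto((-1)^n y,(-1)^m z)$ of the ambient coordinates, hence by an isometry or a mirror-isometry of $\mathbb{R}^3$. Two of your three symmetries (the $\pi/p$-shift and the $2\pi/q$-shift) are the cases $n$ even of this computation; what you are missing is the case $n$ odd, i.e.\ the reparametrization $t\mapsto t+\pi/q$ together with the reflection $y\mapsto -y$, which yields the shift by $\pi/q$. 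Once you add that single extra symmetry, your symmetry group already acts transitively on the chambers and the block-flip step becomes unnecessary. Note also that the paper's argument uses only the parametrization and the critical-phase formula from [S-V]; it does not rely on the Main Theorem or on Lemmas \ref{nombre de m et S}--\ref{les valeurs de y}, so it is logically independent of the $\alpha$/$\beta$ decomposition you invoke.
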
 
\begin{proof} Without loss of generality, we assume $\phi_1<\phi_2$.\\
	If there is no critical phase (i.e. a phase for which the knot is singular) between $\phi_1$ and $\phi_2$, the two knots are isotopic.\\ In [S-V] we showed that the difference between two critical phases is of the form
	
	\begin{equation}\label{formule:difference entre les phases critiques}
	\frac{N}{2}(\frac{m}{p}+\frac{n}{q})
	\end{equation} for two integers $m,n$. \\
	Thus it is enough to prove that, for a given $\phi_3$, and integers $m$ and $n$, the knots $K(N,q,p,\phi_3)$ and  $K(N,q,p,\phi_3+\frac{N}{2}(\frac{m}{p}+\frac{n}{q}))$ are the same or mirror images of one another.\\ Consider the parametrization of $K(N,q,p,\phi)$ given in (\ref{definition de la tresse}); we change its variable by setting
	\begin{equation} s=t+\frac{Nn}{2q} \end{equation} and we rewrite the expression in (\ref{definition de la tresse}) $$\Big(\sin \frac{2\pi q}{N}(t+k),\cos \frac{2\pi p}{N}\big(t+k+\phi_3+\frac{N}{2}(\frac{m}{p}+\frac{n}{q})\big)\Big)$$
	\begin{equation}
	\label{expression avec m et n}
	=\big((-1)^n\sin \frac{2\pi q}{N}(s+k),(-1)^m\cos \frac{2\pi p}{N}(s+k+\phi_3)\big)
	\end{equation}
	Thus, if $m$ and $n$ have the same (resp. opposite) parities, the two knots are isotopic (resp. mirror image of one another).
\end{proof}

\newpage

\footnotesize{ Universit\'e F. Rabelais, D\'ep. de Math\'ematiques, 37000 Tours, France,
\\ Marc.Soret@lmpt.univ-tours.fr,
 Marina.Ville@lmpt.univ-tours.fr}


\begin{thebibliography}
	{br}\bibitem[br]{br} The Liverpool knot group {\it br9z.p}, https://www.liverpool.ac.uk/~su14/knotprogs.html 
	 \bibitem[B-K]{bk} M. Brandenbursky, J. Kedra {\it Concordance group and stable commutator length in braid groups}, arXiv:1402.3191 (2014), to appear in Algebraic \& Geometric Topology (2015).
\bibitem[B-Z]{bz} G. Burde, H. Zieschang, {\it Knots}, de Gruyter Studies in Math., 2nd Ed. vol. 5, Walter de Gruyter \& Co, New York, 2003. 
\bibitem[Co]{co} J.H. Conway assisted by Fung, F. Y. C. {\it The sensual (quadratic) form}       MAA (1997).
\bibitem[Cr]{cr} P. Cromwell {\it Knots and links} Cambridge University Press, 2014. 
\bibitem[F-W]{f-w} J. Franks, R. Williams {\it Braids and the Jones-Conway polynomial}, Trans. Amer. Math. Soc. 303
    (1987) 97-108


\bibitem[Ka]{k} L.H. Kauffman {\it On knots}, PUP, 1987.
\bibitem[KP]{kp} http://www.knotplot.com/
\bibitem[JP]{jp} V.F.R. Jones,  J.H. Przytycki {\it Lissajous knots and billiard knots}   Knot theory,
Banach Center Publications, 42, Inst. of Maths, Polish Acad. of Sciences, Warsaw  1998                                                                                                                                                                                                                                                                                                                                                                                            
\bibitem[K-M]{km} P. Kronheimer, T. Mrowka, {\it The Genus of Embedded Surfaces in the Projective Plane}, Math. Res. Letters 1, 797-808 (1994) 
\bibitem[K-T]{kt} S. Kinoshita, H. Terasaka {\it On unions of knots},  Osaka J. Math. 9 (1957), 131-153.
\bibitem[L-O]{la0} C.  Lamm, D. Obermeyer {\it Billiard knots in a cylinder} J. Knot Theory and its Ramifications 8(3) (1999) Vol. 353-366.
\bibitem[La 1]{la1}  C.  Lamm 
{\it Deformation of cylinder knots} Fourth chapter of Ph.D. thesis {\it Zylinder-Knoten und symmetrische Vereinigungen} Bonner Mathematische Schriften 321 (1999), http://arxiv.org/pdf/1210.6639 (2012)
\bibitem[La 2]{la} C. Lamm {\it Symmetric unions and ribbons knots}, Osaka  J. Math. 37  (2000),  537-550



 \bibitem[M-O] {mo} X. Mo, R. Osserman, {\it On the Gauss map and total curvature of complete minimal surfaces and an extension of Fujimoto's theorem}, Jour. Diff. geom. 31 (1990), 343-355. \bibitem[M-W]{MW} M. Micallef, B. White, {\it The structure of branch points in minimal surfaces and in pseudoholomorphic curves},  Annals of Maths, 139, (1994), 35-85 
\bibitem[Mi]{mi} J. Milnor, {\it Singular points of complex hypersurfaces}, Ann. of Math. Studies, PUP (1968).

   

  
\bibitem[Ro]{Ro} D. Rolfsen, {\it Knots and Links}, Publish or Perish, Houston, 1990. \bibitem[Ru]{Ru} L. Rudolph, {\it Quasipositivity as an obstruction to sliceness}, Bull. of the AMS,  29 (1) (1993)
    Pages 51-59
\bibitem[S-V]{So-Vi} M. Soret, M. Ville, {\it Singularity knots of minimal surfaces in $\mathbb{R}^4$}, Journal of
    Knot theory and its ramifications, 20 (04) 2011, 513-546.
\end{thebibliography}
\end{document}